\newtheorem{theorem}{Theorem}
\newtheorem{lemma}{Lemma}
\newtheorem{corollary}{Corollary}
\newcommand{\X}{\mathcal{X}}
\newcommand{\cU}{\mathcal{U}}
\newcommand{\cI}{\mathcal{I}}
\newcommand{\cJ}{\mathcal{J}}
\newcommand{\oc}{\overline{c}}
\newcommand{\hc}{\hat{c}}
\newcommand{\BIGOP}[1]{\mathop{\mathchoice%
{\raise-0.22em\hbox{\huge $#1$}}%
{\raise-0.05em\hbox{\Large $#1$}}{\hbox{\large $#1$}}{#1}}}
\newcommand{\bigtimes}{\BIGOP{\times}}
\begin{document}

\title{Min-Max Regret Problems with Ellipsoidal Uncertainty Sets\thanks{Effort sponsored by the Air Force Office of Scientific Research, Air Force Material Command, USAF, under
grant number FA8655-13-1-3066. The U.S Government is authorized to reproduce and distribute reprints
for Governmental purpose notwithstanding any copyright notation thereon.}}

\author[1]{Andr\'{e} Chassein\thanks{Email: chassein@mathematik.uni-kl.de}}
\author[2]{Marc Goerigk\thanks{Corresponding author. Email: m.goerigk@lancaster.ac.uk}}

\date{}
\affil[1]{Fachbereich Mathematik, Technische Universit\"at Kaiserslautern, Germany}
\affil[2]{Department of Management Science, Lancaster University, United Kingdom}

\maketitle

\begin{abstract}
We consider robust counterparts of uncertain combinatorial optimization problems, where the difference to the best possible solution over all scenarios is to be minimized. Such minmax regret problems are typically harder to solve than their nominal, non-robust counterparts. While current literature almost exclusively focuses on simple uncertainty sets that are either finite or hyperboxes, we consider problems with more flexible and realistic ellipsoidal uncertainty sets. We present complexity results for the unconstrained combinatorial optimization problem and for the shortest path problem. To solve such problems, two types of cuts are introduced, and compared in a computational experiment.
\end{abstract}

{\bf Keywords:} robust optimization; minmax regret; ellipsoidal uncertainty; complexity; scenario relaxation

\section{Introduction}

We consider general combinatorial optimization problems of the form
\[ \min \{ c^Tx : x\in \X\subseteq\{0,1\}^n \} \]
where the objective vector $c$ is unknown, and coming from a set $\cU$ of possible realizations.
To find a solution $x$ that still performs well under all possible outcomes of $c$, several robust optimization approaches have been developed (for an overview, we refer to \cite{rosurvey,bertsimas-survey,perf}). 

In this paper, we focus on the minmax regret approach, which is amongst the best-established methods in robust optimization \cite{Inuiguchi1995526,KouYu97,Aissi2009}. The basic idea is to find a solution that minimizes the largest difference to the optimal objective value in each scenario. More formally, we use a robust objective function of the form
\[ Reg(x,\cU) = \max \{ c^Tx - opt(c) : c\in\cU \} \]
with $opt(c)$ being the optimal objective value of the original problem with objective function $c$, and aim at solving the minmax regret problem
\[ \min \{ Reg(x,\cU) : x\in\X\} \]
This problem has been extensively analyzed for finite and hyperbox uncertainty sets. 
Most minmax regret problems of this kind are NP-hard, see., e.g.,  \cite{Averbakh2005227,aissi2005complexity,averbakh2001complexity} and the overview in \cite{Aissi2009}. Therefore, both approximation algorithms and heuristic algorithms without performance guarantees have been suggested.

\cite{Kasperski2006} showed that solving the midpoint scenario of an interval uncertainty set gives a 2-approximation for minmax regret combinatorial optimization problems. This was further extended in \cite{Conde2012452} to symmetry points of general uncertainty sets. In \cite{ChasseinGoerigk2014}, an a-posteriori bound for the midpoint solution was presented, which can be used in a branch-and-bound algorithm.

\cite{Montemanni2005771} developed a branch-and-bound algorithm for robust spanning trees. For the same problem, also a scenario relaxation procedure was presented in \cite{Perez14}. The basic idea of scenario relaxation is to begin with a finite subset of scenarios, instead of the whole interval set. Then, worst-case scenarios are iteratively added to the scenario set, until the objective value of this relaxation coincides with the actual objective value of the regret problem with intervals.

Quite surprisingly, little attention has been paid to uncertainty sets that are not finite or hyperboxes. It seems that this is at odds with the development of other approaches to robust optimization, where the use of more sophisticated sets has been of primary importance. We mention ellipsoidal uncertainty sets (see \cite{RObook}) and $\Gamma$-uncertainty sets (see \cite{BertSim04}) as the most prominent examples.

There are several reasons to use ellipsoidal uncertainty sets in robust optimization. First, they give good tractability results for other robust optimization approaches. So far, this question is open for minmax regret. Second, they are flexible, as the generalized $\cap$-ellipsoidal uncertainty introduced in \cite{ben1998robust} even incorporates finite (via their convex hull) and interval sets. 
Third, they are well-motivated from a stochastic setting, where they naturally occur when a normal distribution is cut off at a certain level of probability.

In this paper we consider ellipsoidal uncertainty sets in minmax regret problems. To the best of our knowledge, there is only one previous paper that also considers this type of problem \cite{Takeda2010}. There, the authors consider uncertain convex quadratic problems and present a relaxation heuristic with probability guarantees. In this paper, we focus on combinatorial problems, complexity results and exact solution algorithms.


In Section~\ref{sec:complex} we present complexity results for the unconstrained combinatorial problem, and for the shortest path problem. While the unconstrained problem with finite sets is NP-hard to solve, and the regret objective value of a candidate solution can be computed in polynomial time, we find the surprising result that the reverse holds true for axis-parallel ellipsoids: While it is NP-hard to compute the regret objective of one candidate solution, the optimal solution of the problem can be found in polynomial time.

In Section~\ref{sec:exact}, we discuss two different ways to reformulate the minmax regret problem via a scenario relaxation procedure, resulting in exact, general solution approaches. These algorithms are compared in computational experiments in Section~\ref{sec:experiments}. Final conclusions are drawn and further research directions are posted in Section~\ref{sec:conclusion}. 

\section{Complexity Results}
\label{sec:complex}

\subsection{Problem Definition}

We choose two combinatorial optimization problems to investigate the computational complexity of the minmax regret problem for different uncertainty sets. We consider the cases of
\begin{itemize}
\item interval or hyperbox uncertainty $\mathcal{U}= \bigtimes_{i=1}^n [\hc_i-d_i,\hc_i+d_i]$,
\item axis-parallel ellipsoids $\mathcal{U}= \{c: (c-\hc)^T D (c-\hc)^T \leq 1 \}$, where $D \succ 0$ is a positive definite diagonal matrix,
\item general ellipsoids $\mathcal{U}= \{\hc + C\xi: \|\xi\|_2 \leq 1\}$, and 
\item finite uncertainty sets $\mathcal{U}=\{c^1,\dots,c^k\}$, where $k$ is polynomially bounded.
\end{itemize}

Note that each axis-parallel ellipsoid can be expressed as a general ellipsoid. For each uncertainty set we study the complexity of solving the minmax regret problem, i.e., finding the optimal solution of  
\[\min_{x\in\X} Reg(x,\cU) = \min_{x \in \mathcal{X}} \max_{c \in \mathcal{U}}  \left(c^Tx - \min_{y \in \X} c^Ty \right) \tag{Solve} \]

\noindent
and evaluating the regret of a given solution, i.e., computing the value of
\[Reg(x,\cU) = \max_{c \in \mathcal{U}} \left( c^Tx - \min_{y \in \X} c^Ty\right). \tag{Eval}\]

The unconstrained combinatorial problem is the simplest non-trivial combinatorial problem. The feasible set $\mathcal{X}=\{0,1\}^n$ is the set of all $0,1$-vectors. We denote this problem as $(UP)$.
The shortest path problem is one of the most studied combinatorial problems. Each vector $x$ in the feasible set $\mathcal{X}$ is an incidence vector of an $s-t$ paths in a graph~$G$. This problem is denoted as $(SP)$.

Some of the presented reductions use the \textsf{NP}-complete \emph{partition problem}: Given a list of natural numbers $a_1,\dots,a_n$. The problem is to decide if a subset $I\subset \{1,\dots,n\}$ of the index set exists such that $\sum_{i \in I} a_i = \sum_{i \notin I} a_i$.

The following lemmas are used in some of the proofs.

\begin{lemma}{(See \cite{ben1999robust})}\label{lem1a}
For an ellipsoidal uncertainty set $\mathcal{U}= \{C\xi+ \hc: \|\xi\|_2 \leq 1\}$, it holds that
\begin{equation}
\max_{c\in \mathcal{U}} c^T x = \hc^T x + \| C^Tx \|_2.
\end{equation}
\end{lemma}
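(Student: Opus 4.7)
The plan is to reduce the claim to the Cauchy--Schwarz inequality by exploiting the explicit parametrisation of $\mathcal{U}$. First I would substitute $c = C\xi + \hc$ with $\|\xi\|_2 \le 1$ into the inner product, so that
\[ c^T x \;=\; (C\xi + \hc)^T x \;=\; \hc^T x + \xi^T (C^T x). \]
Since $\hc^T x$ does not depend on $\xi$, maximising $c^T x$ over $\mathcal{U}$ reduces to maximising the linear functional $\xi \mapsto \xi^T (C^T x)$ over the unit Euclidean ball.

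Next I would invoke the Cauchy--Schwarz inequality to obtain the upper bound
\[ \xi^T (C^T x) \;\le\; \|\xi\|_2 \, \|C^T x\|_2 \;\le\; \|C^T x\|_2, \]
using $\|\xi\|_2 \le 1$ in the second step. Adding $\hc^T x$ yields the ``$\le$'' direction of the claimed identity.

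For the reverse inequality I would exhibit an explicit maximiser. If $C^T x \neq 0$, set $\xi^\star = C^T x / \|C^T x\|_2$; then $\|\xi^\star\|_2 = 1$, so $c^\star = C\xi^\star + \hc \in \mathcal{U}$, and a direct substitution gives $(c^\star)^T x = \hc^T x + \|C^T x\|_2$, attaining the bound. If $C^T x = 0$ then both sides of the claimed identity equal $\hc^T x$, so any $\xi$ with $\|\xi\|_2 \le 1$ works (e.g.\ $\xi = 0$), and the identity again holds.

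There is no substantial obstacle here; the only point that requires a small amount of care is the degenerate case $C^T x = 0$, where one must check that the attaining vector $\xi^\star$ need not be defined in the same way but the equality still holds trivially. The whole argument is essentially a one-line application of Cauchy--Schwarz with an explicit description of the worst-case scenario $c^\star$, which will also be useful later when evaluating the regret objective.
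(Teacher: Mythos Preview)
Your argument is correct and is the standard Cauchy--Schwarz proof of this well-known identity. Note that the paper itself does not give a proof of this lemma at all; it simply states the result with a reference to \cite{ben1999robust}, so there is nothing to compare against beyond observing that your write-up is exactly the classical derivation one would find in that source.
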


In case of an axis-parallel ellipsoid, Lemma~\ref{lem1a} becomes:

\begin{lemma}
For an axis-parallel ellipsoidal uncertainty set $\mathcal{U}= \{c: (c-\hc)^T D (c-\hc)^T \leq 1 \}$, it holds that
\begin{equation}
\max_{c\in \mathcal{U}} c^Tx = \hc^Tx + \sqrt{\sum_{i=1}^n D^{-1}_{ii} x_i^2} \label{lemma1}
\end{equation}
\end{lemma}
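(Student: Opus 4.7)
The plan is to reduce the axis-parallel case to the general ellipsoid case already handled by Lemma~\ref{lem1a}, via an explicit change of variables. Since $D$ is diagonal and positive definite, write $D = \operatorname{diag}(d_1,\dots,d_n)$ with $d_i > 0$, so that $D^{-1} = \operatorname{diag}(1/d_1,\dots,1/d_n)$ and its symmetric square root $D^{-1/2} = \operatorname{diag}(1/\sqrt{d_1},\dots,1/\sqrt{d_n})$ is well defined.

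Next I would exhibit an equality of sets between the axis-parallel ellipsoid and a general ellipsoid. Setting $C := D^{-1/2}$, the substitution $c = C\xi + \hc$ gives $(c-\hc)^T D (c-\hc) = \xi^T C^T D C \xi = \xi^T D^{-1/2} D D^{-1/2} \xi = \xi^T \xi = \|\xi\|_2^2$. Hence $\{c : (c-\hc)^T D (c-\hc) \le 1\} = \{C\xi + \hc : \|\xi\|_2 \le 1\}$, putting the uncertainty set into exactly the form required by Lemma~\ref{lem1a}.

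Finally, I would apply Lemma~\ref{lem1a} with this $C$. Since $C$ is diagonal and symmetric, $C^T x = D^{-1/2} x$ has $i$-th entry $x_i/\sqrt{d_i}$, so
\[
\|C^T x\|_2 = \sqrt{\sum_{i=1}^n \tfrac{x_i^2}{d_i}} = \sqrt{\sum_{i=1}^n D^{-1}_{ii} x_i^2},
\]
yielding $\max_{c \in \mathcal{U}} c^T x = \hc^T x + \sqrt{\sum_{i=1}^n D^{-1}_{ii} x_i^2}$ as claimed.

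There is no real obstacle here: the lemma is essentially a corollary of Lemma~\ref{lem1a}, and the only thing to check carefully is the identification of the matrix $C$ whose image of the unit ball reproduces the axis-parallel ellipsoid. The one cosmetic point worth flagging is that the statement contains a typographical transpose ($(c-\hc)^T D (c-\hc)^T$ instead of $(c-\hc)^T D (c-\hc)$), which should be read as the quadratic form for the proof to make sense.
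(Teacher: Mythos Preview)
Your proof is correct and mirrors the paper's treatment: the paper presents this lemma without proof, simply as the specialization of Lemma~\ref{lem1a} to the axis-parallel case, which is precisely the reduction you carry out explicitly via $C = D^{-1/2}$. Your remark about the stray transpose in the statement is also accurate.
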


\subsection{The Unconstrained Combinatorial Problem}

Robust counterparts of the unconstrained combinatorial problem were first considered in \cite{robunc}, where it was shown that the minmax counterpart
\[ \min_{x\in\{0,1\}^n} \max_{c\in\cU} c^T x \]
is \textsf{NP}-hard already for an uncertainty set consisting only of two scenarios. To the best of our knowledge, no complexity results have been provided for the minmax regret counterpart. For the sake of completeness,
we therefore also consider the complexity for interval and finite sets in this section.

We first consider the evaluation problem of $(UP)$. For interval uncertainty and finite uncertainty, evaluating a solution is simple, as the following two theorems demonstrate.

\begin{theorem}
The evaluation problem of $(UP)$ for interval uncertainty sets is in~\textsf{P}.
\label{thm_ucp_eval_int}
\end{theorem}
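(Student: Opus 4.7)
The plan is to exploit the fact that the unconstrained feasible set $\X = \{0,1\}^n$ is a Cartesian product, so for any fixed $c$ the inner minimization decomposes as $\min_{y \in \X} c^T y = \sum_{i=1}^n \min(0, c_i)$. Substituting this into the definition of the regret gives
\[ Reg(x, \cU) = \max_{c \in \cU} \sum_{i=1}^n \bigl( c_i x_i - \min(0, c_i) \bigr). \]
Since the interval uncertainty set is itself a Cartesian product of intervals, the outer maximization decouples into $n$ independent one-variable problems over $c_i \in [\hat{c}_i - d_i, \hat{c}_i + d_i]$.

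Next I would solve each of these univariate problems by a short case distinction on $x_i$. For $x_i = 1$ the integrand simplifies to $\max(c_i, 0)$, which is non-decreasing in $c_i$, so the maximum is attained at the upper endpoint $c_i^\star = \hat{c}_i + d_i$. For $x_i = 0$ the integrand becomes $\max(0, -c_i)$, which is non-increasing in $c_i$, so the maximum is attained at the lower endpoint $c_i^\star = \hat{c}_i - d_i$. In either case the maximizer and the corresponding contribution to the regret can be read off in constant time.

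Assembling these $n$ contributions yields the worst-case scenario and the exact value of $Reg(x, \cU)$ in $O(n)$ arithmetic operations, which places the evaluation problem in \textsf{P}. The only point requiring any care is the justification that both $\min_{y \in \X}$ and $\max_{c \in \cU}$ really do decouple coordinate-wise; once this separability is observed the remainder is just a pair of endpoint evaluations per index, so there is no genuine obstacle in the argument.
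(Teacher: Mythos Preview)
Your proposal is correct and follows essentially the same argument as the paper: both exploit the separability of $\min_{y\in\X} c^Ty$ and of $\max_{c\in\cU}$ over the product structure, arriving at the worst-case scenario $c^*_i = \hat c_i + (2x_i-1)d_i$ and an $O(n)$ evaluation. If anything, your monotonicity case distinction on $x_i$ makes explicit the justification for this choice of $c^*_i$, which the paper simply asserts.
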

\begin{proof}
The evaluation problem is given by
\begin{align}
\begin{split}
\max_{c \in \mathcal{U}} \left( c^Tx - \min_{y\in \mathcal{X}} c^Ty \right)
&= \max_{c \in \mathcal{U}} \left( c^Tx - \sum_{i=1}^n \min(0,c_i) \right) \\
&= \sum_{i=1}^n c^*_i(x_i)x_i - \sum_{i=1}^n \min(0,c^*_i(x_i))
\end{split}\label{th1-eq}
\end{align}
with $c^*_i(x_i) = \hc_i + (2x_i - 1) d_i$. For fixed $x$ this expression can be computed in~$O(n)$.
\end{proof}

\begin{theorem}
The evaluation problem of $(UP)$ for finite uncertainty sets is in~\textsf{P}.
\label{thm_ucp_eval_finite}
\end{theorem}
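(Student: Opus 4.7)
The plan is to exploit two facts in combination: the uncertainty set is finite with $|\cU|=k$ polynomially bounded in the input size, and for the unconstrained problem the inner minimization over $\X=\{0,1\}^n$ decouples coordinatewise. So the evaluation reduces to $k$ independent evaluations, each of which is trivial.

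Concretely, I would first rewrite the max over $\cU$ as an explicit maximum over indices,
\begin{equation*}
Reg(x,\cU) = \max_{j=1,\ldots,k} \left( (c^j)^T x - \min_{y\in\X} (c^j)^T y \right).
\end{equation*}
Then, for a fixed scenario $c^j$, I would observe, as in the proof of Theorem~\ref{thm_ucp_eval_int}, that $\min_{y\in\{0,1\}^n}(c^j)^Ty = \sum_{i=1}^n \min(0,c^j_i)$, which is computable in $O(n)$ time. The term $(c^j)^Tx$ is also an $O(n)$ computation.

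Finally, I would sum up the work: for each of the $k$ scenarios we spend $O(n)$, for a total runtime of $O(nk)$, which is polynomial by assumption on $k$.

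There is no real obstacle here; the only thing to be slightly careful about is to state explicitly that the polynomial bound on $k$ is what makes the straightforward enumeration over $\cU$ a polynomial-time algorithm (otherwise the argument would not go through), and to note that the per-scenario evaluation is easy precisely because $\X$ is unconstrained and so the inner minimization separates across coordinates.
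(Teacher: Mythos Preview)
Your proposal is correct and follows essentially the same approach as the paper: rewrite the regret as a maximum over the $k$ scenarios, use the coordinatewise decomposition $\min_{y\in\{0,1\}^n}(c^j)^Ty=\sum_i\min(0,c^j_i)$ to evaluate each scenario in $O(n)$, and conclude with the overall $O(nk)$ bound. The paper's proof is in fact slightly terser but contains exactly these steps.
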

\begin{proof}
The evaluation problem is given by
\begin{align*}
\max_{c \in \mathcal{U}} \left( c^Tx - \min_{y\in \mathcal{X}} c^Ty \right) 
&= \max_{c \in \mathcal{U}} \left( c^Tx - \sum_{i=1}^n \min(0,c_i) \right) \\
&= \max_{j=1,\dots,k} \left( \sum_{i=1}^n c_i^jx_i - \sum_{i=1}^n \min(0,c_i^j) \right)
\end{align*}
For fixed $x$ this expression can be computed in $O(nk)$. 
\end{proof}

We now turn to the more involved case of ellipsoidal uncertainty sets. Here, evaluating a solution is already a hard problem, as the following theorem shows.

\begin{theorem}\label{theo:up-axis}
The evaluation problem of $(UP)$ for axis-parallel ellipsoidal uncertainty sets is \textsf{NP}-complete.
\label{thm_ucp_eval_ell}
\end{theorem}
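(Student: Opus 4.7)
The plan is to reduce from Partition to the decision version of the evaluation problem, together with exhibiting an \textsf{NP} certificate. First I would normalize the regret coordinate-wise. Using $\min_{y \in \{0,1\}^n} c^T y = \sum_i \min(c_i, 0)$, the regret of a fixed $x$ is
\[ Reg(x, \cU) = \max_{c \in \cU} \sum_i \max\{(2x_i-1)c_i,\, 0\}. \]
The coordinate flip $\tilde c_i := (2x_i-1) c_i$ with flipped centre $\tilde\hc_i := (2x_i-1)\hc_i$ leaves the axis-parallel ellipsoid invariant (because $D$ is diagonal and $(2x_i-1)^2 = 1$), so it suffices to analyse $x = \mathbf{1}$:
\[ \max_{\tilde c}\, \sum_i \tilde c_i^+ \quad \text{s.t.} \quad \sum_i D_{ii}(\tilde c_i - \tilde\hc_i)^2 \leq 1. \]

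Next I would parametrize by the sign pattern $J = \{i : \tilde c_i \geq 0\}$. For any fixed $J$, Cauchy--Schwarz produces the upper bound
\[ \sum_{i \in J} \tilde\hc_i + \sqrt{\sum_{i \in J} 1/D_{ii}}, \]
and this bound is attained whenever the complement indices can be handled at zero budget (i.e.\ $\tilde\hc_i \leq 0$, so that $\tilde c_i = \tilde\hc_i \leq 0$ works) and the Cauchy--Schwarz optimizer on $J$ has non-negative components. I will arrange both conditions in the reduction by placing the centre entirely in the negative orthant.

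For the reduction, given a Partition instance $a_1, \dots, a_n$ with $A = \sum_i a_i$, set $x = \mathbf{1}$, $\hc_i = -a_i$, and $D_{ii} = 1/(2A a_i)$. Then $1/D_{ii} = 2A a_i$ and, writing $s(J) = \sum_{i \in J} a_i$, the above formula collapses to $f(s(J))$ with $f(s) = -s + \sqrt{2As}$. The scalar function $f$ is strictly concave with unique maximum $f(A/2) = A/2$ at $s = A/2$, so the decision question ``$Reg(\mathbf{1}, \cU) \geq A/2$?'' is equivalent to the existence of a subset summing to $A/2$, i.e.\ to the Partition question. Membership in \textsf{NP} is witnessed by the subset $J$ itself: the candidate value involves only a single square root, which can be compared to any rational threshold by isolating the radical and a polynomial number of squarings.

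The main obstacle I expect is justifying that the Cauchy--Schwarz upper bound is actually achieved at the optimal $J$, and not merely an overestimate. For the reduction parameters, the KKT stationarity condition gives $\tilde c_i^\ast = a_i(A/\lambda - 1)$ with Lagrange multiplier $\lambda = \tfrac{1}{2}\sqrt{2A\,s(J)}$; positivity of $\tilde c_i^\ast$ reduces to $\lambda \leq A$, i.e.\ $s(J) \leq 2A$, which is automatic since $s(J) \leq A$. Hence the closed form is exact on the reduction instance and the equivalence with Partition is tight, yielding \textsf{NP}-hardness.
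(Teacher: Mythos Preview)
Your reduction is correct and follows essentially the same route as the paper: both build a Partition instance whose regret is a strictly concave function of a subset sum, peaking at $A/2$ (you take $x=\mathbf{1}$, $\hc_i=-a_i$, $D_{ii}^{-1}=2Aa_i$; the paper takes $x=0$, $\hc_i=2a_i$, $D_{ii}^{-1}=8Aa_i$, which is the same instance up to the coordinate flip you describe and a harmless rescaling). The paper reaches the closed form a little more directly by swapping $\max_{c}$ and $\max_{y}$ and applying Lemma~\ref{lem1a} to $c^T(x-y)$---this sidesteps the sign-pattern/Cauchy--Schwarz tightness check you flag as the ``main obstacle''---while conversely you supply the \textsf{NP} membership argument that the paper leaves implicit.
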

\begin{proof}
We give a reduction from the partition problem.

The axis-parallel ellipsoidal uncertainty set $\mathcal{U}$ is defined by the midpoint vector $\hc$ and diagonal matrix $D$. We set $\hc_i=2a_i$ and $D_i=\frac{1}{8Aa_i}$ for $i=1,\dots,n$ and $A=\sum_{i=1}^n a_i$. Consider the evaluation problem for $x=0$:
\begin{align*}
\max_{c \in \mathcal{U}} \left( c^Tx - \min_{y\in \mathcal{X}} c^Ty \right)
&= \max_{c \in \mathcal{U}} \left( 0 - \min_{y\in \mathcal{X}} c^Ty \right) \\
&= \max_{c \in \mathcal{U}} \max_{y\in \mathcal{X}} c^T(-y) \\
&= \max_{y\in \mathcal{X}} \max_{c \in \mathcal{U}}  c^T(-y) \\
&\stackrel{\text{Eq.~\eqref{lemma1}}}{=} \max_{y\in \mathcal{X}} \sum_{i=1}^n 2a_i(-y_i) + \sqrt{\sum_{i=1}^n 8Aa_i(-y_i)^2} \\ 
&= -\min_{y\in \mathcal{X}} \sum_{i=1}^n 2a_iy_i - \sqrt{\sum_{i=1}^n 8Aa_iy_i}
\end{align*}

Define for each solution $y \in \mathcal{X}$ the value $\lambda_y := \frac{1}{A}\sum_{i=1}^n a_iy_i$. Note that the objective value of the minimization problem can be expressed using $\lambda_y$

\begin{align*}
\sum_{i=1}^n 2a_iy_i - \sqrt{\sum_{i=1}^n 8Aa_iy_i} = 2A\lambda_y - \sqrt{8A^2\lambda_y}
\end{align*}

Consider the function $f:[0,1] \rightarrow \mathbb{R}, f(\lambda)=2A\lambda - \sqrt{8A^2 \lambda}$. The minimum of this function is attained for $\lambda^* = 0.5$ due to the first order condition, further $f(\lambda^*)=-A$. This observation proves that the regret for $x=0$ is at least $A$ if and only if the partition instance is a yes-instance. 
\end{proof}

As a direct consequence of Theorem~\ref{theo:up-axis}, we also have that the general case is \textsf{NP}-complete.
\begin{corollary}\label{cor_ucp_eval_ell}
The evaluation problem of $(UP)$ for general ellipsoidal uncertainty sets is \textsf{NP}-complete.
\end{corollary}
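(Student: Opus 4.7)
The plan is to observe that axis-parallel ellipsoids form a subclass of general ellipsoids, so Theorem~\ref{theo:up-axis} already yields the \textsf{NP}-hardness half with essentially no extra work. Explicitly, any axis-parallel set $\{c:(c-\hc)^T D (c-\hc)\le 1\}$ with diagonal $D\succ 0$ can be rewritten in the general ellipsoidal form $\{\hc+C\xi:\|\xi\|_2\le 1\}$ by taking the diagonal matrix $C=D^{-1/2}$. Thus the partition reduction used in the proof of Theorem~\ref{theo:up-axis} already produces a valid instance of the general ellipsoidal evaluation problem, and the equivalence between the partition answer and the regret attaining value $A$ carries over verbatim. So the reduction itself does not need to be rerun; I would simply cite it.

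The only genuinely new thing to argue is membership in \textsf{NP}. For this I would use Lemma~\ref{lem1a} to rewrite the regret as
\[ Reg(x,\cU) \;=\; \max_{y\in\X}\,\max_{c\in\cU} c^T(x-y) \;=\; \max_{y\in\X}\Bigl(\hc^T(x-y)+\|C^T(x-y)\|_2\Bigr). \]
A short certificate for the decision version "is $Reg(x,\cU)\ge K$?" is then just a vector $y\in\{0,1\}^n$: once $y$ is guessed, the inner value is obtained by one matrix--vector product and one Euclidean norm computation, and hence verification is polynomial in the input size.

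The only mildly delicate point, which I would address with a sentence, is that $\|C^T(x-y)\|_2$ may be irrational. This is handled in the standard way by phrasing the decision question on the squared quantity, or equivalently by testing $K-\hc^T(x-y)\le 0$ first and, if positive, comparing its square to $\|C^T(x-y)\|_2^2$; both sides are then rationals of polynomial bit length. Beyond this caveat I do not expect any real obstacle, and the proof reduces essentially to the one-line inclusion ``axis-parallel $\subseteq$ general'' combined with a convex-maximization certificate for NP-membership.
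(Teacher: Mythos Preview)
Your proposal is correct and matches the paper's approach: the paper simply states that the corollary is ``a direct consequence of Theorem~\ref{theo:up-axis}'' and gives no further proof, relying on the immediate inclusion of axis-parallel ellipsoids in general ellipsoids. You spell this out explicitly and additionally supply the \textsf{NP}-membership argument (certificate $y\in\{0,1\}^n$, verification via Lemma~\ref{lem1a}) together with the handling of the irrational norm, details the paper leaves implicit.
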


Having established the complexity of the evaluation problem, we now turn to the solution problem. We first consider the complexity for finite uncertainty sets.

\begin{theorem}
The solution problem for finite uncertainty sets is \textsf{NP}-complete.
\label{thm_ucp_sol_finite}
\end{theorem}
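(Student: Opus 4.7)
The plan is to show membership in \textsf{NP} via Theorem~\ref{thm_ucp_eval_finite} (which lets us certify a guessed solution in polynomial time), and to establish \textsf{NP}-hardness by a polynomial reduction from the partition problem using only two scenarios.

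Given a partition instance $a_1,\dots,a_n$ with $A = \sum_{i=1}^n a_i$, I would build an instance of $(UP)$ with $\cU = \{c^1, c^2\}$ where $c^1_i = a_i$ and $c^2_i = -a_i$. Because $(UP)$ is unconstrained and all $a_i \geq 0$, the nominal optima are explicit: $opt(c^1) = 0$ (attained at $y=0$) and $opt(c^2) = -A$ (attained at $y=\mathbf{1}$). Writing $s(x) = \sum_{i=1}^n a_i x_i$, the two scenario regrets simplify to $(c^1)^T x - opt(c^1) = s(x)$ and $(c^2)^T x - opt(c^2) = A - s(x)$, so
\[ Reg(x,\cU) = \max\bigl(s(x),\, A - s(x)\bigr), \]
which, viewed as a function of $s(x) \in [0,A]$, is minimized precisely at $s(x) = A/2$ with minimum value $A/2$.

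Since the attainable values of $s(x)$ over $x\in\{0,1\}^n$ are exactly the subset sums of $a_1,\dots,a_n$, there exists $x$ with $Reg(x,\cU) \leq A/2$ if and only if there is $I \subseteq \{1,\dots,n\}$ with $\sum_{i\in I} a_i = A/2$, i.e., if and only if the partition instance is a yes-instance. The reduction is clearly polynomial, which together with NP-membership finishes the proof.

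There is essentially no hard step here: the reduction has the same flavor as the standard partition-based arguments for minmax regret, and uses only two scenarios, so the hardness is not sensitive to the cardinality $k$ of $\cU$. The only point that needs mild care is the symmetric construction $c^2 = -c^1$, which is what forces the min-max expression into the form $\max(s, A-s)$ so that the threshold $A/2$ captures the partition condition exactly.
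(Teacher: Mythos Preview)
Your proof is correct and follows essentially the same route as the paper: the same two-scenario reduction $c^1=a$, $c^2=-a$ from partition, leading to $Reg(x,\cU)=\max(s(x),A-s(x))$ and the threshold $A/2$. You additionally make the \textsf{NP}-membership step explicit via Theorem~\ref{thm_ucp_eval_finite}, which the paper's proof leaves implicit.
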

\begin{proof}
Again we use a reduction from partition. The uncertainty set consists of only two scenarios $c^1=(a_1,\dots,a_n)$ and $c^2=(-a_1,\dots,-a_n)$. Denote by $A=\sum_{i=1}^n a_i$. We claim that a solution with regret at most $\frac{A}{2}$ exists if and only if the partition instance is a yes instance. Let $I$ be the solution of the partition instance. We define solution $x^*_i=1 \ \forall i\in I$ and $x^*_i=0 \ \forall i \notin I$. The regret of $x^*$ is given by
\begin{align*}
\max(\sum_{i\in I} a_i,-\sum_{i \in I} a_i +A)= \max(\sum_{i\in I} a_i,\sum_{i\notin I} a_i) = \frac{A}{2}.
\end{align*}
Conversely, let a solution $x$ with regret at most $\frac{A}{2}$ be given. Let $S=\{i:x_i=1 ,i=1,\dots,n\}$. The regret of $x$ is given by
\begin{align*}
\max(\sum_{i\in S} a_i,-\sum_{i \in S} a_i +A)= \max(\sum_{i\in S} a_i,\sum_{i\notin S} a_i) \geq \frac{A}{2}.
\end{align*}
Since the regret of $x$ is at most $\frac{A}{2}$, we know that the regret of $x$ is exactly~$\frac{A}{2}$. Therefore, $\max(\sum_{i\in S} a_i,\sum_{i\notin S} a_i) = \frac{A}{2}$, which proves that $S$ is a solution for the partition instance. 
\end{proof}

Instead of considering the case of interval and axis-parallel ellipsoid uncertainty sets separately, we directly consider the more general case of \emph{axis-symmetric uncertainty sets}. A set $\mathcal{U}$ is axis-symmetric if it exists a midpoint $\hat{c} \in \mathcal{U}$ such that for any $c\in \mathcal{U}$ with $c=\hat{c} + \gamma$ for any index $i$ it holds that $c^i:=c - 2\gamma_i e_i \in \mathcal{U}$ where $e_i$ is the $i^{\text{th}}$ unit vector. Prominent axis-symmetric uncertainty sets are interval, axis-parallel ellipsoids, or $\Gamma-$uncertainty sets.

\begin{theorem}
The midpoint solution
\[ \hat{x} \in \arg \min \{ \hat{c}^T x : x\in\X \} \]
is an optimal solution of $(UP)$ for axis-symmetric uncertainty sets.
\label{thm_ucp_sol_axis_par}
\end{theorem}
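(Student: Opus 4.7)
The plan is to reduce the regret of $(UP)$ to a coordinate-separable form and then argue by a one-coordinate exchange. For $\X = \{0,1\}^n$ the inner minimization is free, giving $\min_{y\in\X} c^T y = \sum_i \min(c_i,0)$, and therefore
\[
Reg(x,\cU) \;=\; \max_{c\in\cU}\sum_{i=1}^n \max\bigl((2x_i-1)c_i,\,0\bigr).
\]
Since $\hat x$ minimizes $\hat c^T x$ over $\{0,1\}^n$, we may take $\hat x_i = 1$ whenever $\hat c_i \leq 0$ and $\hat x_i = 0$ otherwise. The goal is to show that for every $x\in\X$, $Reg(\hat x,\cU) \leq Reg(x,\cU)$.

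The core step I would prove is a flip lemma: if $x$ and $\hat x$ differ in coordinate $i$, let $x'$ agree with $x$ off coordinate $i$ with $x'_i := \hat x_i$; then $Reg(x',\cU) \leq Reg(x,\cU)$. Pick $c^*\in\cU$ attaining $Reg(x',\cU)$. Because $x$ and $x'$ coincide off coordinate $i$, all terms of the regret sum for $j\neq i$ agree, so only coordinate $i$ matters. If $(2x_i-1)c^*_i \geq 0$, then the contribution of $c^*$ at $i$ for $x$ is $|c^*_i|$, at least as large as the contribution for $x'$, so $c^*$ itself witnesses $Reg(x,\cU) \geq Reg(x',\cU)$. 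Otherwise $(2x'_i-1)c^*_i > 0$, and I would invoke axis-symmetry to reflect $c^*_i$ across $\hat c_i$, setting $c^{**} := c^* - 2(c^*_i - \hat c_i)e_i$. By definition of $\hat x_i$, the signs of $(2x'_i-1) = -(2x_i-1)$ and $\hat c_i$ are opposite; and $(2x'_i-1)c^*_i > 0$ forces $c^*_i$ to have the opposite sign from $\hat c_i$. A short sign computation then shows $(2x_i-1)c^{**}_i = |c^*_i| + 2|\hat c_i| \geq |c^*_i|$, so the contribution of $c^{**}$ at coordinate $i$ for $x$ dominates the contribution of $c^*$ at coordinate $i$ for $x'$. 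Hence $c^{**}\in\cU$ witnesses $Reg(x,\cU) \geq Reg(x',\cU)$.

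Iterating this flip over the coordinates where $x$ and $\hat x$ disagree transforms $x$ into $\hat x$ along a chain of solutions with nonincreasing regret, giving $Reg(\hat x,\cU) \leq Reg(x,\cU)$ for every $x$. The main obstacle is keeping the sign bookkeeping in the reflection step clean; once the closed form for $Reg(x,\cU)$ and the description of $\hat x_i$ via the sign of $\hat c_i$ are in hand, the argument uses nothing about $\cU$ beyond axis-symmetry, which explains why intervals, axis-parallel ellipsoids, and $\Gamma$-uncertainty sets are all covered simultaneously.
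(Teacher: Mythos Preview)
Your proposal is correct and follows essentially the same route as the paper: both arguments write the regret via the unconstrained inner minimum, then perform a single-coordinate flip of $x$ toward $\hat x$ and use the axis-symmetry reflection $c\mapsto c-2(c_i-\hat c_i)e_i$ of the worst-case scenario to show the regret cannot increase. The paper phrases the flip as ``an optimal $x^*$ stays optimal after flipping one wrong coordinate,'' whereas you phrase it as ``any $x$ has regret at least that of its flip,'' but the core inequality and the reflection trick are identical.
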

\begin{proof}
We define $\hat{x}_i=1$ if and only if $\hc_i\leq 0$. The goal is to show that $\hat{x}$ is optimal for the minmax regret problem. Let $x^*$ be an optimal solution with $x^*_i=1$ and $\hc_i>0$ for some $i$. In the following we show that $x'=x^*-e_i$ is also an optimal solution.
\begin{align*}
\operatorname{Reg}(x') &= \max_{c\in \mathcal{U}} \max_{y\in \mathcal{X}} c^T(x'-y) \\
&= \max_{c\in \mathcal{U}} \left( c^Tx' -\sum_{i=1}^n \min(0,c_i) \right) \\
&= c'^Tx' -\sum_{i=1}^n \min(0,c'_i) \\
&= \hc^Tx' + \gamma'^Tx' -\sum_{i=1}^n \min(0,\hc_i + \gamma'_i)
\end{align*}
where $c'$ is the worst case scenario (for the regret objective function) and $c'=\hc+\gamma'$. We define $\tilde{\gamma}_j = \gamma'_j \ \forall j \neq i$ and $\tilde{\gamma}_i = - \gamma'_i$. We claim that
\begin{align*}
 \hc^Tx' + \gamma'^Tx' -\sum_{i=1}^n \min(0,\hc_i + \gamma'_i) \leq  \hc^Tx^* + \tilde{\gamma}^Tx^* -\sum_{i=1}^n \min(0,\hc_i + \tilde{\gamma}_i) \tag{$*$}
\end{align*}
Using $(*)$ we can show that $x'$ is also an optimal solution, since
\begin{align*}
\operatorname{Reg}(x') &= \hc^Tx' + \gamma'^Tx' -\sum_{i=1}^n \min(0,\hc_i + \gamma'_i) \\
&\leq  \hc^Tx^* + \tilde{\gamma}^Tx^* -\sum_{i=1}^n \min(0,\hc_i + \tilde{\gamma}_i) \leq  \operatorname{Reg}(x^*) 
\end{align*}
Simplifying $(*)$ yields
\begin{align*}
-\min(0,\hc_i+\gamma'_i) &\leq \hc_i  -\gamma'_i - \min(0,\hc_i-\gamma'_i) \\
\Leftrightarrow \max(0,-\hc_i-\gamma'_i) &\leq \max(0,\hc_i-\gamma'_i) 
\end{align*}
which is true since $0 \leq \hc_i$. The other direction is analog: If $\hc_i\leq 0$ and $x^*_i=0$, $x'=x^*+e_i$ is also an optimal solution. Both directions together show that $\hat{x}$ is an optimal solution of the minmax regret problem.
\end{proof}


For ellipsoidal uncertainty sets, we find the surprising result that while it is a difficult task to evaluate the objective value of a solution, finding a solution with the best possible objective value is simple. However, for general ellipsoids, the solution problem is \textsf{NP}-hard, as the following result states. 

%

\begin{theorem}
The solution problem  of $(UP)$ for ellipsoidal uncertainty sets is \textsf{NP}-hard.
\label{thm_ucp_sol_ell}
\end{theorem}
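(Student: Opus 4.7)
The plan is to reduce from the partition problem, essentially by embedding the two–scenario version of $(UP)$ (already shown to be \textsf{NP}-hard in Theorem~\ref{thm_ucp_sol_finite}) into a single, degenerate ellipsoid. Given an instance $a_1,\dots,a_n$ of partition with $A=\sum_i a_i$, I would set $\hc=0$ and take $C \in \R^{n\times 1}$ to be the column vector $(a_1,\dots,a_n)^T$, so that $\cU=\{a\xi : \xi\in[-1,1]\}$ is the line segment joining $-a$ and $+a$. The paper's definition $\{\hc+C\xi : \|\xi\|_2\le 1\}$ does not require $C$ to have full rank, so this is a legitimate ellipsoidal uncertainty set.

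Next I would exploit convexity. For every fixed $x$, the map $c\mapsto c^Tx-\min_{y\in\X}c^Ty = \max_{y\in\X}c^T(x-y)$ is a maximum of linear functions, hence convex in $c$. Therefore the worst-case $c$ over the convex set $\cU$ is attained at an extreme point, and the extreme points of the segment $\cU$ are exactly $\pm a$. Consequently
\[Reg(x,\cU)=\max\bigl(Reg(x,\{a\}),\,Reg(x,\{-a\})\bigr).\]
A direct computation (as in the proof of Theorem~\ref{thm_ucp_sol_finite}) gives $Reg(x,\{a\})=a^Tx$ and $Reg(x,\{-a\})=A-a^Tx$, so minimizing $Reg(x,\cU)$ over $\X=\{0,1\}^n$ amounts to minimizing $\max(a^Tx,\,A-a^Tx)$. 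This minimum equals $A/2$ iff there exists $x\in\{0,1\}^n$ with $a^Tx=A/2$, i.e.\ iff the partition instance is a yes-instance. Since the reduction is polynomial, \textsf{NP}-hardness follows.

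The main obstacle I anticipate is the objection that $C$ being rank-deficient makes $\cU$ a ``degenerate'' ellipsoid (a line segment). If one insists on a full-dimensional ellipsoid, I would replace $C$ by $\widetilde C=[a \mid \varepsilon I_n]\in\R^{n\times(n+1)}$. Using the identity $\max_{\xi_0\in[-1,1]}(b\xi_0+c\sqrt{1-\xi_0^2})=\sqrt{b^2+c^2}$ for $c\ge 0$, one obtains
\[Reg(x,\cU_\varepsilon)=\max_{y\in\X}\sqrt{(a^T(x-y))^2+\varepsilon^2\|x-y\|_2^2},\]
which reduces to the previous expression as $\varepsilon\to 0$. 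Since the $a_i$ are integers, non-partitionable $x$ satisfy $|2a^Tx-A|\ge 1$, so choosing any $\varepsilon$ with $\varepsilon^2 n < 1$ (well within polynomial encoding size) preserves the threshold gap between yes- and no-instances. Thus \textsf{NP}-hardness is maintained even when $\cU$ is required to be full-dimensional, which is the only delicate point in the argument.
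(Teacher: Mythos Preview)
Your proof is correct and follows essentially the same route as the paper: the paper's argument is simply to take the degenerate ellipsoid equal to the line segment between the two scenarios $c^1=a$ and $c^2=-a$ from Theorem~\ref{thm_ucp_sol_finite}, and to observe that the inner maximum over this segment is attained at an endpoint by convexity. Your write-up supplies considerably more detail than the paper's two-line sketch, and your additional perturbation argument producing a full-dimensional ellipsoid $\widetilde C=[a\mid\varepsilon I_n]$ is a genuine addition that the paper does not provide; the only minor point is that the threshold condition should be $\varepsilon^2 n<(2A+1)/4$ rather than $\varepsilon^2 n<1$, but this does not affect the argument.
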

\begin{proof}
The idea of this proof is to build a degenerated ellipsoid which corresponds to the line segment between the two scenarios $c^1$ and $c^2$ used in the proof of Theorem~\ref{thm_ucp_sol_finite}. Denote by $\mathcal{L}$ the line between $c^1$ and $c^2$. Note that $\max_{c\in \mathcal{L}} \max_{y\in \X} c^T(x-y) = \max_{c\in \{c^1,c^2\}} \max_{y\in \X} c^T(x-y)$.
\end{proof}

\noindent
We summarize the complexity results of this section in Table~\ref{tab:up}.

\begin{table}[h]
\makebox[\textwidth][c]{
\begin{tabular}{|c|c|c|c|c|}
\hline
& Interval & Finite  & Axis-Parallel Ellipsoid & General Ellipsoid   \\ \hline
Eval & \textsf{P} (Thm.~\ref{thm_ucp_eval_int}) & \textsf{P} (Thm.~\ref{thm_ucp_eval_finite}) & \textsf{NPC} (Thm.~\ref{thm_ucp_eval_ell}) & \textsf{NPC} (Cor.~\ref{cor_ucp_eval_ell}) \\ 
Solve & \textbf{Easy} (Thm.~\ref{thm_ucp_sol_axis_par}) & \textsf{NPC} (Thm.~\ref{thm_ucp_sol_finite}) & \textbf{Easy} (Thm.~\ref{thm_ucp_sol_axis_par}) & \textsf{NPH} (Thm.~\ref{thm_ucp_sol_ell}) \\ \hline
\end{tabular}}
\caption{Overview of the different complexity results of the minmax regret unconstrained combinatorial problem.}\label{tab:up}
\end{table}

\subsection{Shortest Path Problem}

We assume in this section that $\mathcal{U} \subset \mathbb{R}^+_n$ to avoid shortest path problems with negative arc weights, since these problems are already $\textsf{NP}$-hard in general. The complexity of the minmax regret shortest path problem is well-researched for interval and finite uncertainty sets. For a finite, but constant number of scenarios, the problem is $\textsf{NP}$-hard and allows a pseudo-polynomial solution algorithm \cite{yu1998robust}. For a non-constant number of scenarios and in the case of interval uncertainty, the problem is strongly \textsf{NP}-hard \cite{KouYu97,averbakh2004interval}. To evaluate the regret of a solution, we need to solve $k$ shortest path problems in the case of a finite uncertainty set with $k$ scenarios, and only a single shortest path problem in the case of interval uncertainty.

We begin with the evaluation problem for ellipsoidal uncertainty sets in Theorem~\ref{thm_sp_eval_ell}, before considering the solution problem in Theorem~\ref{thm_sp_sol_ell}.

\begin{theorem}
The evaluation problem for (axis-parallel) ellipsoidal uncertainty sets is \textsf{NP}-complete.
\label{thm_sp_eval_ell}
\end{theorem}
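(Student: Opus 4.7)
The plan is to adapt the partition reduction from Theorem~\ref{theo:up-axis}, replacing the trivial choice $x=0$ used for $(UP)$ by a distinguished $s$-$t$ path in a parallel-edge gadget. Membership in \textsf{NP} comes for free from Lemma~\ref{lem1a}: for any guessed path $y$ the inner maximum over $c$ equals $\hc^T(x-y)+\sqrt{\sum_i D_{ii}^{-1}(x_i-y_i)^2}$, so a single $s$-$t$ path is a polynomial-size certificate and the decision ``$Reg(x,\cU)\geq t$'' reduces to a rational comparison after isolating the square root. The remaining work is the hardness direction.

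Given a partition instance $a_1,\dots,a_n$ with $A:=\sum_i a_i$, I would build a layered graph on nodes $v_0,\dots,v_n$ (with $s=v_0$, $t=v_n$) and two parallel edges $e_i^1,e_i^2$ between $v_{i-1}$ and $v_i$, so that every $s$-$t$ path corresponds to a subset $I\subseteq\{1,\dots,n\}$, namely $i\in I$ iff the path uses $e_i^1$. The axis-parallel ellipsoid is specified by
\[ \hc_{e_i^1}=2a_i+c_0,\quad \hc_{e_i^2}=c_0,\quad D_{e_i^1,e_i^1}^{-1}=D_{e_i^2,e_i^2}^{-1}=4Aa_i,\]
with a uniform offset $c_0:=2\sqrt{A\max_i a_i}$ chosen so that $\hc_j\geq\sqrt{D_{jj}^{-1}}$ for every edge $j$, which secures $\cU\subseteq\R^+_{2n}$ as demanded by this section. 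The solution whose regret we evaluate is the path $x$ that uses $e_i^2$ for every $i$.

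For an alternative path $y$ encoding subset $I$, the vector $x-y$ has entry $-1$ on $e_i^1$ for $i\in I$, entry $+1$ on $e_i^2$ for $i\in I$, and zero elsewhere, so the offset $c_0$ cancels in $\hc^T(x-y)=-2\sum_{i\in I}a_i$ and Lemma~\ref{lem1a} yields $\max_c c^T(x-y)=-2\sum_{i\in I}a_i+\sqrt{8A\sum_{i\in I}a_i}$. Writing $\lambda:=A^{-1}\sum_{i\in I}a_i\in[0,1]$, this becomes $f(\lambda)=2A(\sqrt{2\lambda}-\lambda)$, whose unique maximum on $[0,1]$ is $f(1/2)=A$ by the first-order condition, exactly mirroring the argument in the proof of Theorem~\ref{theo:up-axis}. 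Consequently $Reg(x,\cU)\geq A$ iff some subset achieves $\lambda=1/2$, iff the partition instance is a yes-instance. I expect the main obstacle to be reconciling the reduction with the positivity requirement $\cU\subset\R^+_n$ imposed at the start of this subsection: the ellipsoid used in Theorem~\ref{theo:up-axis} dips into negative coordinates and cannot be transplanted verbatim, and the device that saves the reduction is precisely the uniform offset $c_0$, which is invisible to every path difference because each $s$-$t$ path in the gadget uses exactly one edge of each pair.
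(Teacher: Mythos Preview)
Your reduction is correct and follows the same high-level template as the paper's proof---a partition reduction in which the inner maximum collapses to a one-variable concave function of $\lambda=\tfrac{1}{A}\sum_{i\in I}a_i$ with unique maximiser $\lambda=\tfrac12$---but the gadget you build is genuinely different. The paper adds a separate bypass edge $e_1$ from $s$ to $t$ with a large midpoint cost $M$ and evaluates the regret of the single-edge path $x=e_1$; the big-$M$ forces every maximising $y$ off the bypass and into the parallel-edge chain, and the parameters on the chain edges ($\hc$-values of order $28Aa_k$) are hand-tuned so that $\cU\subset\R^+_n$ holds and the resulting function $f(\lambda)=28A^2+3A\lambda-\sqrt{4A^2+24A^2\lambda}$ is minimised at $\lambda=\tfrac12$. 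Your construction avoids the artificial bypass entirely by taking $x$ to be one of the two canonical paths through the chain and exploiting the key observation that any uniform additive offset $c_0$ on the midpoints disappears from $\hc^T(x-y)$ because every $s$-$t$ path uses exactly one edge per layer; this lets you secure $\cU\subseteq\R^+_{2n}$ with a single scalar $c_0=2\sqrt{A\max_i a_i}$ rather than by inflating each midpoint individually. The payoff is a shorter argument with cleaner constants and no big-$M$, at the cost of requiring the reader to notice the cancellation trick. Both routes yield the same strictly-concave $f$ phenomenon, so neither proves more than the other, but your version is the more economical of the two. One cosmetic point: your choice of $c_0$ makes the inequality $\hc_j\ge\sqrt{D_{jj}^{-1}}$ tight on some edges, so the ellipsoid touches the boundary of the nonnegative orthant; if you want $\cU$ strictly inside $\R^+_{2n}$ as the section states, add any positive constant to $c_0$.
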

\begin{proof}
We use again a reduction from the partition problem. For a given instance $a_1,\dots,a_n$ we define the graph as shown in Figure~\ref{fig:graph_sp_eval}.

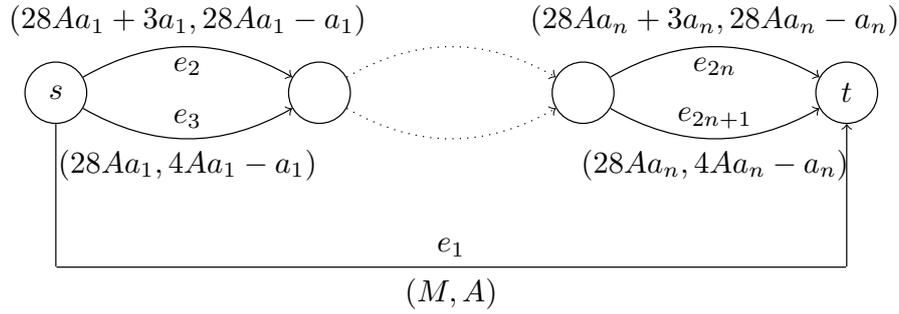
\begin{figure}
\centering
\resizebox{\textwidth}{!}{%
\begin{tikzpicture}[ 
     every state/.style={ 
       minimum size=2em, 
       fill=white, 
       text=black 
     }, 
     node distance=3cm 
   ] 
   
	\tikzstyle{empty} = [circle,fill = white, minimum size = 0pt,inner sep=0pt];

     \node[state] (s)              {$s$}; 
     \node[state] (1) [right of=s] {}; 
     \node[state] (2) [right of=1] {}; 
     \node[state] (t) [right of=2] {$t$}; 

	\node[empty] (e1) at (0,-2) {};
	\node[empty] (e2) at (9,-2) {};

	\node[empty] (e3) at (0,4) {};
	\node[empty] (e4) at (0,5) {};

    \path[->, bend left] (s) edge node[above] {$(28Aa_1+3a_1,28Aa_1-a_1)$} node[below] {$e_2$} (1); 
	\path[->, bend right] (s) edge node[below] {$(28Aa_1,4Aa_1-a_1)$} node[above] {$e_3$} (1); 

    \path[->, bend left, dotted] (1) edge (2); 
    \path[->, bend right, dotted] (1) edge (2);

    \path[->, bend left] (2) edge node[above] {$(28Aa_n+3a_n,28Aa_n-a_n)$} node[below] {$e_{2n}$} (t); 
	\path[->, bend right] (2) edge node[below] {$(28Aa_n,4Aa_n-a_n)$} node[above] {$e_{2n+1}$} (t); 
			
	\draw[-] (s) edge (e1);
	\path[-] (e1) edge node[below] {$(M,A)$} node[above] {$e_1$} (e2); 
	\draw[->] (e2) edge (t);

\end{tikzpicture}
}
\caption{The graph used in the proof of Theorem~\ref{thm_sp_eval_ell}. The labels below and above each edge indicate the number of the edge and the the values $(\hc_e,d_e)$ which describe the uncertainty set.}
\label{fig:graph_sp_eval}
\end{figure}

The pairs $(\hc_e,d_e)$ on each edge define the size of the uncertainty set $\mathcal{U} = \{c: (c-\hc)^TD(c- \hc)\leq 1\}$, where $D$ is implicitly given by  $D_e^{-1}:=d_e$. $M$ is a sufficiently large constant depending on $A$. The set of all edges is denoted by $E'$. The set of all edges except of the first edge is denoted by $E=E' - \{e_1\}$. Note that $\hc_e\geq d_e \ \forall e \in E'$ and $d_e\geq 1 \ \forall e \in E'$. Hence, $\mathcal{U} \subset \mathbb{R}^+_n$. Consider the problem of computing $\operatorname{Reg}(x)$ for $x=(1,0,\dots,0)$, i.e., the path consisting only of the first edge $e_1$. Using Lemma~\ref{lemma1} we can conclude that 
\begin{align*}
\operatorname{Reg}(x) &= \max_{y\in \mathcal{X}} \max_{c\in \mathcal{U}} c^T(x-y) \\
&= \max_{y\in \mathcal{X}}  \left( \hc^T(x-y) + \sqrt{\sum_{e\in E'} d_e (x_e-y_e)^2 } \right) \\
&= \hc^Tx - \min_{y\in \mathcal{X}} \left( \hc^Ty - \sqrt{\sum_{e\in E'} d_e (x_e-y_e)^2 } \right)
\end{align*}
Since $M$ is a large constant we can exclude the solution $y=(1,0,\dots,0)$ without changing the optimal value of the minimization problem. Further, we have that $y_{2k}+y_{2k+1}=1 \ \forall k=1,\dots,n$ due to the structure of the graph. Hence, the problem simplifies to
\begin{align*}
\operatorname{Reg}(x) =& M - \min_{y\in \mathcal{X}} \left( \sum_{e\in E} \hc_e y_e - \sqrt{\sum_{e\in E} d_e y_e + A} \right) \\
=& M - \min_{y\in \mathcal{X}} \Bigg( \sum_{k=1}^n y_{2k}(28Aa_k+3a_k)+ (1-y_{2k})(28Aa_k) \\ 
&- \sqrt{\sum_{k=1}^n y_{2k}(28Aa_k-a_k)+ (1-y_{2k})(4Aa_k-a_k) + A}\ \Bigg) \\
=& M - \min_{y\in \mathcal{X}} \left( 28A^2 + 3\sum_{k=1}^n y_{2k}a_k - \sqrt{4A^2+24A\sum_{k=1}^n y_{2k}a_k} \right)
\end{align*}
Hence, the objective value of each solution $y$ can be expressed by the value $\lambda_y = \frac{1}{A}\sum_{k=1}^n y_{2k} a_k$.
\begin{align*}
\operatorname{Reg}(x) &= M - \min_{y\in \mathcal{X}} \left( 28A^2 + 3A \lambda_y - \sqrt{4A^2+24A^2 \lambda_y} \right)
\end{align*}
Consider the function $f:[0,1] \rightarrow \mathbb{R},$ $f(\lambda) = 28A^2 + 3A \lambda - \sqrt{4A^2+24A^2 \lambda}$. The minimum of this function is attained for $\lambda^* = 0.5$ due to the first order condition, further $f(\lambda^*)=28A^2-2.5A$. Hence, $Reg(x)\geq M - 28A^2+2.5A$ if and only if the partition instance is a yes-instance.
\end{proof}

\begin{theorem}
The solution problem for (axis-parallel) ellipsoidal uncertainty sets is \textsf{NP}-hard.
\label{thm_sp_sol_ell}
\end{theorem}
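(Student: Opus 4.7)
The plan is to reduce from the partition problem, adapting the construction in the proof of Theorem~\ref{thm_sp_eval_ell}. In that evaluation hardness argument, a chain of parallel-edge diamonds together with a "long" edge $e_1$ of midpoint $M$ was used to show that computing $Reg(x)$ for the specific path $x = (1, 0, \ldots, 0)$ is \textsf{NP}-hard. For the solution problem $x$ is no longer fixed, so I would redesign the graph so that the optimal $x$ itself encodes the partition answer, effectively promoting the inner adversary optimization of the evaluation proof to an outer minimization over candidate paths.

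Concretely, I would first keep the chain $s = v_0, v_1, \ldots, v_n = t$ of $n$ parallel-edge diamonds (dropping the long edge $e_1$, or re-weighting it so that it is never preferred) so that each $s$-$t$ path $x$ corresponds to a subset $I(x) \subseteq \{1, \ldots, n\}$ via its choice in each diamond. Next, I would calibrate the midpoints $\hc$ and the diagonal matrix $D$ of the axis-parallel ellipsoid using the partition numbers $a_k$ and a scaling constant analogous to the one in Theorem~\ref{thm_sp_eval_ell}. Then, by equation~\eqref{lemma1},
\[ Reg(x) = \max_{y \in \X} \left[ \hc^T(x-y) + \sqrt{\sum_e d_e (x_e - y_e)^2} \right], \]
where $(x_e - y_e)^2 \in \{0,1\}$ and the two edges of each diamond contribute symmetrically, so the inner maximization reduces to choosing, independently for each diamond, whether $y$ agrees with $x$ or flips it. A first-order-condition argument analogous to the one around $f(\lambda) = 28A^2 + 3A\lambda - \sqrt{4A^2 + 24A^2\lambda}$ in Theorem~\ref{thm_sp_eval_ell} should then identify the minimum over $x$ as corresponding to balanced partitions $\sum_{k \in I(x)} a_k = A$.

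The main obstacle I expect is the minimax coupling: each candidate $x$ faces a tailored adversary response $y$, so the map $x \mapsto Reg(x)$ cannot be read off as cleanly as in the evaluation proof where $x$ was fixed. To show that the minimum is strictly attained at balanced partitions, one must exhibit, for any unbalanced $x$, an explicit responder (for instance the diamond-wise complement of $x$) whose regret witness exceeds the balanced value. This in turn requires carefully tuning the ratio between midpoint differences and uncertainty magnitudes so that the $\sqrt{\cdot}$ term dominates exactly when the partition is unbalanced; the quadratic-like structure of the first-order condition in Theorem~\ref{thm_sp_eval_ell} suggests this balance is achievable, but verifying it uniformly across all pairs $(x, y)$ of paths is the bulk of the technical work.
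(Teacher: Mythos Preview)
Your plan has a structural obstacle that the deferred ``bulk of the technical work'' does not address. In a diamond chain, write $\gamma_k=\hc_{2k}-\hc_{2k+1}$ and $\delta_k=d_{2k}+d_{2k+1}$. For a candidate path $x$ taking the top edge exactly on $S\subseteq[n]$, the adversary's payoff from flipping the set $F$ is
\[
\sum_{k\in F\cap S}\gamma_k-\sum_{k\in F\setminus S}\gamma_k+\sqrt{\sum_{k\in F}\delta_k}.
\]
Flipping every $k\in S$ is free gain, but over $S^c$ the adversary trades a linear penalty against a square-root gain, and this inner problem is itself a subset-sum in the $a_k$'s. Consequently $Reg(x)$ is \emph{not} a function of the single aggregate $\lambda_x=\tfrac{1}{A}\sum_{k\in S}a_k$, and the one-dimensional first-order argument you borrow from Theorem~\ref{thm_sp_eval_ell} does not apply. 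The natural escapes fail: if you make the midpoints symmetric ($\gamma_k=0$) the adversary flips everything and $Reg(x)=\sqrt{\sum_k\delta_k}$ is constant in $x$; if you keep $\gamma_k>0$ and force the adversary to flip everything (large $\delta$), then $Reg(x)=\gamma A(2\lambda_x-1)+\sqrt{\delta A}$, minimized at $\lambda_x=0$ rather than $\lambda_x=\tfrac12$. In short, the min--max coupling you flag as an obstacle is not a matter of bookkeeping but actually blocks the partition route on this graph.

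The paper's proof takes a completely different path: it reduces from $3$-SAT and chooses the uncertainty set to be a \emph{unit ball} ($\hc_e=D_{ee}=1$ for every edge) in a graph where every $s$--$t$ path has the same length $L$. Then $\hc^T(x-y)=0$ and $Reg(x)=\sqrt{2L-2S(x)}$ with $S(x)=\min_{y\in\X}x^Ty$, so the inner maximization is trivial and minimizing regret becomes the purely combinatorial problem of maximizing the minimum edge-overlap with any other path. A gadget graph is built so that $\max_x S(x)\ge 1$ iff the $3$-SAT instance is satisfiable. The key idea you are missing is to \emph{design away} the inner optimization (uniform costs, equal-length paths) and push all the hardness into the graph topology rather than into the cost parameters.
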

\begin{proof}
We use a reduction from exact $3$-SAT which is known to be $\textsf{NP}$-complete. We begin the construction by defining the uncertainty set $\mathcal{U}=\{c: (c-\hc)^TD(c-\hc)\leq 1\}$ with diagonal matrix $D$.
We set the average cost of each edge $e$ and the corresponding diagonal entry of $D$ are $1$, i.e., $\hc_e=D_{ee}=1 \ \forall e$. Note that $\mathcal{U} \subset \mathbb{R}^n_+$. Second, all $s-t$ paths consist of $L$ edges. With these restrictions the minmax regret problem can be simplified as follows
\begin{align*}
\min_{x \in \mathcal{X}} \max_{c\in \mathcal{U}} \left( c^Tx - \min_{y \in \mathcal{X}} c^Ty\right) &=\min_{x \in \mathcal{X}} \max_{y \in \mathcal{X}} \max_{c\in \mathcal{U}} c^T(x-y) \\ 
&=\min_{x \in \mathcal{X}} \max_{y \in \mathcal{X}} \left( \hc^T(x-y) + ||x-y||_2 \right)\\
&=\min_{x \in \mathcal{X}} \max_{y \in \mathcal{X}} \left( L - L + ||x-y||_2\right) \\
&=\min_{x \in \mathcal{X}} \max_{y \in \mathcal{X}} \sqrt{x^Tx -2x^Ty+ y^Ty} \\
&=\min_{x \in \mathcal{X}} \max_{y \in \mathcal{X}} \sqrt{2L-2x^Ty}
\end{align*}
For each path represented by $x$ denote by $S(x)=\min_{y \in \mathcal{X}} x^Ty$ the minimum number of edges this path shares with all other $s-t$ paths. Then, $\operatorname{Reg}(x)= \sqrt{2L-2S(x)}$. Therefore, minimizing the regret is equivalent to maximizing $S(x)$. For a given SAT instance, we construct a graph such that $\max_{x\in \mathcal{X}} S(x) \geq 1$ if and only if the SAT instance is a yes-instance. This proves the theorem.

Assume that we are given an instance of $3$-SAT with $n$ literals $l_1,\dots,l_n$ and $m$ clauses $C_1,\dots,C_m$. 
To describe the graph we construct, we use a simple example.
Assume the $3$-SAT instance contains only $3$ literals $l_1,l_2,$ and $l_3$ and a single clause $C_1=(l_1 \vee \overline{l}_2 \vee l_3)$. For clarity, we introduce the graph $G$ in three parts $G_1$,$G_2$, and $G_3$. First we state the part of the graph $G_1$ in Figure~\ref{fig:graph_G1}. The next claims justify to restrict our attention to $G_1$ if we search for a path $x$ maximizing $S(x)$. \\

\noindent
\textbf{1. Claim:} For all paths $x$ in $G$ it holds that $S(x)\leq 1$. \\

\noindent
\textbf{2. Claim:} If a path $x$ in $G$ exists with $S(x)=1$, then there exists also a path $x'$ contained in $G_1$ with $S(x')=1$.\\

The claims are proved at the end of the graph construction, when the complete graph is defined. 

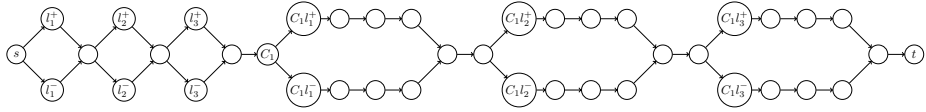
\begin{figure}
\centering
\resizebox{\textwidth}{!}{%
\begin{tikzpicture}
\tikzstyle{every state}=[circle,fill=white!25,minimum size=15pt,inner sep=1pt];
\node[state] (0) at (0,0) {$s$};
\node[state] (1pos) at (1,1) {$l_1^+$};
\node[state] (1neg) at (1,-1) {$l_1^-$};
\node[state] (1con) at (2,0) {};

\node[state] (2pos) at (3,1) {$l_2^+$};
\node[state] (2neg) at (3,-1) {$l_2^-$};
\node[state] (2con) at (4,0) {};

\node[state] (3pos) at (5,1) {$l_3^+$};
\node[state] (3neg) at (5,-1) {$l_3^-$};
\node[state] (3con) at (6,0) {};

\draw [ ->] (0) edge (1pos); 
\draw [ ->] (0) edge (1neg); 
\draw [ ->] (1pos) edge (1con); 
\draw [ ->] (1neg) edge (1con); 

\draw [ ->] (1con) edge (2pos); 
\draw [ ->] (1con) edge (2neg); 
\draw [ ->] (2pos) edge (2con); 
\draw [ ->] (2neg) edge (2con); 

\draw [ ->] (2con) edge (3pos); 
\draw [ ->] (2con) edge (3neg); 
\draw [ ->] (3pos) edge (3con); 
\draw [ ->] (3neg) edge (3con); 

\node[state] (C1start) at (7,0) {$C_1$};

\node[state] (C1pos1) at (8,1) {$C_1l_1^+$};
\node[state] (C1pos2) at (9,1) {};
\node[state] (C1pos3) at (10,1) {};
\node[state] (C1pos4) at (11,1) {};

\node[state] (C1neg1) at (8,-1) {$C_1l_1^-$};
\node[state] (C1neg2) at (9,-1) {};
\node[state] (C1neg3) at (10,-1) {};
\node[state] (C1neg4) at (11,-1) {};

\node[state] (C1end) at (12,0) {};

\node[state] (C2start) at (13,0) {};

\node[state] (C2pos1) at (14,1) {$C_1l_2^+$};
\node[state] (C2pos2) at (15,1) {};
\node[state] (C2pos3) at (16,1) {};
\node[state] (C2pos4) at (17,1) {};

\node[state] (C2neg1) at (14,-1) {$C_1l_2^-$};
\node[state] (C2neg2) at (15,-1) {};
\node[state] (C2neg3) at (16,-1) {};
\node[state] (C2neg4) at (17,-1) {};

\node[state] (C2end) at (18,0) {};

\node[state] (C3start) at (19,0) {};

\node[state] (C3pos1) at (20,1) {$C_1l_3^+$};
\node[state] (C3pos2) at (21,1) {};
\node[state] (C3pos3) at (22,1) {};
\node[state] (C3pos4) at (23,1) {};

\node[state] (C3neg1) at (20,-1) {$C_1l_3^-$};
\node[state] (C3neg2) at (21,-1) {};
\node[state] (C3neg3) at (22,-1) {};
\node[state] (C3neg4) at (23,-1) {};

\node[state] (C3end) at (24,0) {};
\node[state] (t) at (25,0) {$t$};
\draw [ ->] (C3end) edge (t);

\draw [ ->] (3con) edge (C1start); 
\draw [ ->] (C1start) edge (C1pos1); 
\draw [ ->] (C1start) edge (C1neg1);
\draw [ ->] (C1pos1) edge (C1pos2); 
\draw [ ->] (C1pos2) edge (C1pos3); 
\draw [ ->] (C1pos3) edge (C1pos4); 
\draw [ ->] (C1pos4) edge (C1end); 
\draw [ ->] (C1neg1) edge (C1neg2); 
\draw [ ->] (C1neg2) edge (C1neg3); 
\draw [ ->] (C1neg3) edge (C1neg4); 
\draw [ ->] (C1neg4) edge (C1end);

\draw [ ->] (C1end) edge (C2start); 
\draw [ ->] (C2start) edge (C2pos1); 
\draw [ ->] (C2start) edge (C2neg1);
\draw [ ->] (C2pos1) edge (C2pos2); 
\draw [ ->] (C2pos2) edge (C2pos3); 
\draw [ ->] (C2pos3) edge (C2pos4); 
\draw [ ->] (C2pos4) edge (C2end); 

\draw [ ->] (C2neg1) edge (C2neg2); 
\draw [ ->] (C2neg2) edge (C2neg3); 
\draw [ ->] (C2neg3) edge (C2neg4); 
\draw [ ->] (C2neg4) edge (C2end); 

\draw [ ->] (C2end) edge (C3start); 
\draw [ ->] (C3start) edge (C3pos1); 
\draw [ ->] (C3start) edge (C3neg1);
\draw [ ->] (C3pos1) edge (C3pos2); 
\draw [ ->] (C3pos2) edge (C3pos3); 
\draw [ ->] (C3pos3) edge (C3pos4); 
\draw [ ->] (C3pos4) edge (C3end); 

\draw [ ->] (C3neg1) edge (C3neg2); 
\draw [ ->] (C3neg2) edge (C3neg3); 
\draw [ ->] (C3neg3) edge (C3neg4); 
\draw [ ->] (C3neg4) edge (C3end);

\end{tikzpicture}
}
\caption{This part of the graph ($G_1$) is used to represent the literal and clause assignments.}
\label{fig:graph_G1}
\end{figure}

Each path $x$
in $G_1$
represents a literal and clause assignment. The first part of the path from node $s$ to node $C_1$ represents the assignment of the literals. For example: The assignment $l_1=0,l_2=1,l_3=1$ is represented by the path that contains the nodes $l_1^-,l_2^+,$ and $l_3^+$. The second part of the path form node $C_1$ to node $t$ represents how the literals of clause $C_1$ are chosen. If the part contains for example the nodes $C_1l_1^+,C_1l_2^-,$ and $C_1l_3^+$, then we assign the literals $l_1=1,l_2=0,$ and $l_3=1$ in clause $C_1$. Note that all paths in this graph have the same length. Two requirements need to be modeled. First, the assignment of the literals must correspond with the assignments of the literals in each clause and, second, the literal assignment should satisfy all clauses. In the next step we are going to introduce the part $G_2$ and $G_3$ which help to model these requirements. The underlying idea is the following: If one of these two requirements in not fulfilled by the path $x$, there exists another $s-t$ path $y$ (containing edges of $G_2$ or $G_3$) which has no edge in common with $x$, i.e., $S(x)=0$. 

Next we introduce the part $G_2$ which makes sure that the assignment of the literals must be consistent with the assignments of the literals in each clause.

\begin{figure}[h]
\centering
\resizebox{\textwidth}{!}{%
\begin{tikzpicture}
\tikzstyle{every state}=[circle,fill=white!25,minimum size=15pt,inner sep=1pt];
\node[state] (0) at (0,0) {$s$};
\node[state] (1pos) at (1,1) {$l_1^+$};
\node[state] (1neg) at (1,-1) {$l_1^-$};
\node[state] (1con) at (2,0) {};

\node[state] (2pos) at (3,1) {$l_2^+$};
\node[state] (2neg) at (3,-1) {$l_2^-$};
\node[state] (2con) at (4,0) {};

\node[state] (3pos) at (5,1) {$l_3^+$};
\node[state] (3neg) at (5,-1) {$l_3^-$};
\node[state] (3con) at (6,0) {};

\draw [ ->] (0) edge (1pos); 
\draw [ ->] (0) edge (1neg); 
\draw [ ->] (1pos) edge (1con); 
\draw [ ->] (1neg) edge (1con); 

\draw [ ->] (1con) edge (2pos); 
\draw [ ->] (1con) edge (2neg); 
\draw [ ->] (2pos) edge (2con); 
\draw [ ->] (2neg) edge (2con); 

\draw [ ->] (2con) edge (3pos); 
\draw [ ->] (2con) edge (3neg); 
\draw [ ->] (3pos) edge (3con); 
\draw [ ->] (3neg) edge (3con); 

\node[state] (C1start) at (7,0) {$C_1$};

\node[state] (C1pos1) at (8,1) {$C_1l_1^+$};
\node[state] (C1pos2) at (9,1) {};
\node[state] (C1pos3) at (10,1) {};
\node[state] (C1pos4) at (11,1) {};

\node[state] (C1neg1) at (8,-1) {$C_1l_1^-$};
\node[state] (C1neg2) at (9,-1) {};
\node[state] (C1neg3) at (10,-1) {};
\node[state] (C1neg4) at (11,-1) {};

\node[state] (C1end) at (12,0) {};

\node[state] (C2start) at (13,0) {};

\node[state] (C2pos1) at (14,1) {$C_1l_2^+$};
\node[state] (C2pos2) at (15,1) {};
\node[state] (C2pos3) at (16,1) {};
\node[state] (C2pos4) at (17,1) {};

\node[state] (C2neg1) at (14,-1) {$C_1l_2^-$};
\node[state] (C2neg2) at (15,-1) {};
\node[state] (C2neg3) at (16,-1) {};
\node[state] (C2neg4) at (17,-1) {};

\node[state] (C2end) at (18,0) {};

\node[state] (C3start) at (19,0) {};

\node[state] (C3pos1) at (20,1) {$C_1l_3^+$};
\node[state] (C3pos2) at (21,1) {};
\node[state] (C3pos3) at (22,1) {};
\node[state] (C3pos4) at (23,1) {};

\node[state] (C3neg1) at (20,-1) {$C_1l_3^-$};
\node[state] (C3neg2) at (21,-1) {};
\node[state] (C3neg3) at (22,-1) {};
\node[state] (C3neg4) at (23,-1) {};

\node[state] (C3end) at (24,0) {};
\node[state] (t) at (25,0) {$t$};
\draw [ ->] (C3end) edge (t);

\draw [ ->] (3con) edge (C1start); 
\draw [ ->] (C1start) edge (C1pos1); 
\draw [ ->] (C1start) edge (C1neg1);
\draw [ ->] (C1pos1) edge (C1pos2); 
\draw [ ->] (C1pos2) edge (C1pos3); 
\draw [ ->] (C1pos3) edge (C1pos4); 
\draw [ ->] (C1pos4) edge (C1end); 
\draw [ ->] (C1neg1) edge (C1neg2); 
\draw [ ->] (C1neg2) edge (C1neg3); 
\draw [ ->] (C1neg3) edge (C1neg4); 
\draw [ ->] (C1neg4) edge (C1end);

\draw [ ->] (C1end) edge (C2start); 
\draw [ ->] (C2start) edge (C2pos1); 
\draw [ ->] (C2start) edge (C2neg1);
\draw [ ->] (C2pos1) edge (C2pos2); 
\draw [ ->] (C2pos2) edge (C2pos3); 
\draw [ ->] (C2pos3) edge (C2pos4); 
\draw [ ->] (C2pos4) edge (C2end); 

\draw [ ->] (C2neg1) edge (C2neg2); 
\draw [ ->] (C2neg2) edge (C2neg3); 
\draw [ ->] (C2neg3) edge (C2neg4); 
\draw [ ->] (C2neg4) edge (C2end); 

\draw [ ->] (C2end) edge (C3start); 
\draw [ ->] (C3start) edge (C3pos1); 
\draw [ ->] (C3start) edge (C3neg1);
\draw [ ->] (C3pos1) edge (C3pos2); 
\draw [ ->] (C3pos2) edge (C3pos3); 
\draw [ ->] (C3pos3) edge (C3pos4); 
\draw [ ->] (C3pos4) edge (C3end); 

\draw [ ->] (C3neg1) edge (C3neg2); 
\draw [ ->] (C3neg2) edge (C3neg3); 
\draw [ ->] (C3neg3) edge (C3neg4); 
\draw [ ->] (C3neg4) edge (C3end);

\draw [thick, ->] (1pos) edge [bend right=70] (C1neg1);
\draw [thick, ->] (1neg) edge [bend left=70] (C1pos1);

\draw [thick, ->] (2pos) edge [bend right=70] (C2neg1);
\draw [thick, ->] (2neg) edge [bend left=70] (C2pos1);

\draw [thick, ->] (3pos) edge [bend right=70] (C3neg1);
\draw [thick, ->] (3neg) edge [bend left=70] (C3pos1);

\draw [thick, ->] (C1pos2) edge [bend left=70] (t);
\draw [thick, ->] (C1neg2) edge [bend right=70] (t);

\draw [thick, ->] (C2pos2) edge [bend left=70] (t);
\draw [thick, ->] (C2neg2) edge [bend right=70] (t);

\draw [thick, ->] (C3pos2) edge [bend left=60] (t);
\draw [thick, ->] (C3neg2) edge [bend right=60] (t);

\end{tikzpicture}
}
\caption{The additional edges in the graph $(G_2)$ that model the relationship between $S(x)$ guarantee the consistency of literal assignment and literal assignment in each clause are thick. Each thick edge in the figure corresponds to a chain of edges in the graph.}
\label{fig:graph_G2}
\end{figure}
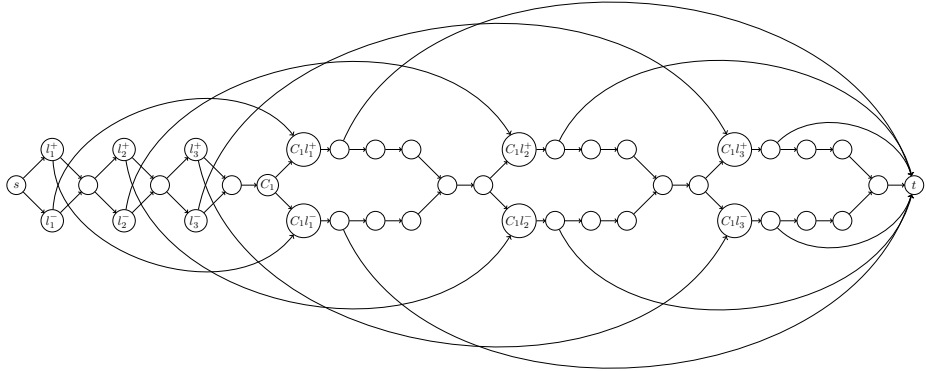

We introduce chains of edges that connect the first part of $G_1$ with the second part of $G_1$ as shown in Figure~\ref{fig:graph_G2}. Note that the length of each chain can be chosen in such a way that all $s-t$ paths have the same length. Assume that path $x$ represents an inconsistent assignment for $l_1$, e.g., let $x$ contain node $l_1^+$ and $C_1l_1^-$. We claim that in this case a path $y$ exists with $x^Ty=0$. Consider the path $y$ that contains from $G_1$ only the nodes $s,l_1^-$,$C_1l_1^+$, the successor node of $C_1l_1^+$ and $t$. This path has no arc in common with $x$, hence,  $x^Ty=0$. This relation holds analogously for the other literals $l_2$ and $l_3$. If only a single inconsistent assignment is made, there exists a path $y$ with $x^Ty=0$. On the other hand, if $x$ represents a consistent assignment, all paths $y$ in $G_1$ and $G_2$ have at least one edge in common with $x$.

Next we introduce part $G_3$ that models the relationship between $S(x)$ and the correct clause assignment. The additional chains of edges are shown in Figure~\ref{fig:graph_G3}. Again the length of each of these chains can be chosen in such a way that all $s-t$ paths have the same length.

\begin{figure}[h]
\centering
\resizebox{\textwidth}{!}{%
\begin{tikzpicture}
\tikzstyle{every state}=[circle,fill=white!25,minimum size=15pt,inner sep=1pt];
\node[state] (0) at (0,0) {$s$};
\node[state] (1pos) at (1,1) {$l_1^+$};
\node[state] (1neg) at (1,-1) {$l_1^-$};
\node[state] (1con) at (2,0) {};

\node[state] (2pos) at (3,1) {$l_2^+$};
\node[state] (2neg) at (3,-1) {$l_2^-$};
\node[state] (2con) at (4,0) {};

\node[state] (3pos) at (5,1) {$l_3^+$};
\node[state] (3neg) at (5,-1) {$l_3^-$};
\node[state] (3con) at (6,0) {};

\draw [ ->] (0) edge (1pos); 
\draw [ ->] (0) edge (1neg); 
\draw [ ->] (1pos) edge (1con); 
\draw [ ->] (1neg) edge (1con); 

\draw [ ->] (1con) edge (2pos); 
\draw [ ->] (1con) edge (2neg); 
\draw [ ->] (2pos) edge (2con); 
\draw [ ->] (2neg) edge (2con); 

\draw [ ->] (2con) edge (3pos); 
\draw [ ->] (2con) edge (3neg); 
\draw [ ->] (3pos) edge (3con); 
\draw [ ->] (3neg) edge (3con); 

\node[state] (C1start) at (7,0) {$C_1$};

\node[state] (C1pos1) at (8,1) {$C_1l_1^+$};
\node[state] (C1pos2) at (9,1) {};
\node[state] (C1pos3) at (10,1) {};
\node[state] (C1pos4) at (11,1) {};

\node[state] (C1neg1) at (8,-1) {$C_1l_1^-$};
\node[state] (C1neg2) at (9,-1) {};
\node[state] (C1neg3) at (10,-1) {};
\node[state] (C1neg4) at (11,-1) {};

\node[state] (C1end) at (12,0) {};

\node[state] (C2start) at (13,0) {};

\node[state] (C2pos1) at (14,1) {$C_1l_2^+$};
\node[state] (C2pos2) at (15,1) {};
\node[state] (C2pos3) at (16,1) {};
\node[state] (C2pos4) at (17,1) {};

\node[state] (C2neg1) at (14,-1) {$C_1l_2^-$};
\node[state] (C2neg2) at (15,-1) {};
\node[state] (C2neg3) at (16,-1) {};
\node[state] (C2neg4) at (17,-1) {};

\node[state] (C2end) at (18,0) {};

\node[state] (C3start) at (19,0) {};

\node[state] (C3pos1) at (20,1) {$C_1l_3^+$};
\node[state] (C3pos2) at (21,1) {};
\node[state] (C3pos3) at (22,1) {};
\node[state] (C3pos4) at (23,1) {};

\node[state] (C3neg1) at (20,-1) {$C_1l_3^-$};
\node[state] (C3neg2) at (21,-1) {};
\node[state] (C3neg3) at (22,-1) {};
\node[state] (C3neg4) at (23,-1) {};

\node[state] (C3end) at (24,0) {};
\node[state] (t) at (25,0) {$t$};
\draw [ ->] (C3end) edge (t); 

\draw [ ->] (3con) edge (C1start); 
\draw [ ->] (C1start) edge (C1pos1); 
\draw [ ->] (C1start) edge (C1neg1);
\draw [ ->] (C1pos1) edge (C1pos2); 
\draw [ ->] (C1pos2) edge (C1pos3); 
\draw [ ->] (C1pos3) edge (C1pos4); 
\draw [ ->] (C1pos4) edge (C1end); 
\draw [ ->] (C1neg1) edge (C1neg2); 
\draw [ ->] (C1neg2) edge (C1neg3); 
\draw [ ->] (C1neg3) edge (C1neg4); 
\draw [ ->] (C1neg4) edge (C1end);

\draw [ ->] (C1end) edge (C2start); 
\draw [ ->] (C2start) edge (C2pos1); 
\draw [ ->] (C2start) edge (C2neg1);
\draw [ ->] (C2pos1) edge (C2pos2); 
\draw [ ->] (C2pos2) edge (C2pos3); 
\draw [ ->] (C2pos3) edge (C2pos4); 
\draw [ ->] (C2pos4) edge (C2end); 

\draw [ ->] (C2neg1) edge (C2neg2); 
\draw [ ->] (C2neg2) edge (C2neg3); 
\draw [ ->] (C2neg3) edge (C2neg4); 
\draw [ ->] (C2neg4) edge (C2end); 

\draw [ ->] (C2end) edge (C3start); 
\draw [ ->] (C3start) edge (C3pos1); 
\draw [ ->] (C3start) edge (C3neg1);
\draw [ ->] (C3pos1) edge (C3pos2); 
\draw [ ->] (C3pos2) edge (C3pos3); 
\draw [ ->] (C3pos3) edge (C3pos4); 
\draw [ ->] (C3pos4) edge (C3end); 

\draw [ ->] (C3neg1) edge (C3neg2); 
\draw [ ->] (C3neg2) edge (C3neg3); 
\draw [ ->] (C3neg3) edge (C3neg4); 
\draw [ ->] (C3neg4) edge (C3end);

\draw [thick, ->] (1pos) edge [bend right=70] (C1neg1);
\draw [thick, ->] (1neg) edge [bend left=70] (C1pos1);

\draw [thick, ->] (2pos) edge [bend right=70] (C2neg1);
\draw [thick, ->] (2neg) edge [bend left=70] (C2pos1);

\draw [thick, ->] (3pos) edge [bend right=70] (C3neg1);
\draw [thick, ->] (3neg) edge [bend left=70] (C3pos1);

\draw [thick, ->] (C1pos2) edge [bend left=70] (t);
\draw [thick, ->] (C1neg2) edge [bend right=70] (t);

\draw [thick, ->] (C2pos2) edge [bend left=70] (t);
\draw [thick, ->] (C2neg2) edge [bend right=70] (t);

\draw [thick, ->] (C3pos2) edge [bend left=60] (t);
\draw [thick, ->] (C3neg2) edge [bend right=60] (t);

\draw[thick, dotted, ->] (0) edge [bend left=30] (C1pos3);
\draw[thick, dotted, ->] (C1pos4) edge (C2neg3);
\draw[thick, dotted, ->] (C2neg4) edge (C3pos3);
\draw[thick, dotted, ->] (C3pos4) edge (t);

\end{tikzpicture}
}
\caption{The additional edges in the graph ($G_3$) that model the relationship between $S(x)$ and the correct clause assignment are dotted. Each dotted edge in the figure corresponds to a chain of edges in the graph.}
\label{fig:graph_G3}
\end{figure}
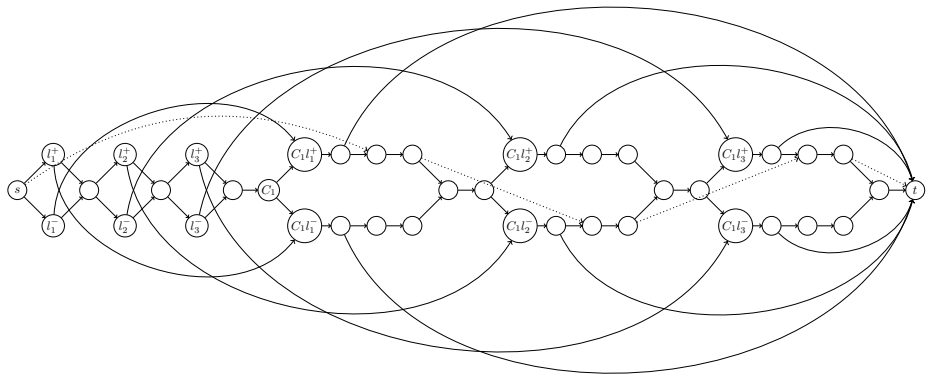

Assume that clause $C_1$ is not satisfied by the represented literal assignment, i.e., path $x$ contains $C_1l_1^-,C_1l_2^+,$ and $C_1l_3^-$. It is obvious that the path $y$ that contains all three of the dotted chains has no edge in common with $x$ and, hence, $S(x)=0$. Conversely, if only one literal is assigned such that $C_1$ is fulfilled, this path shares at least one arc with $x$. 

We now show that if path $x$ represents a consistent literal and clause assignment, then $S(x)\geq 1$, i.e., for every path $y$ it holds that $x^Ty\geq 1$, if and only if all clauses are fulfilled.

Assume that $x$ represents a literal assignment that fulfills all clauses. For the sake of contradiction assume that a path $y$ exists that has no edge in common with $x$. It is an easy observation that $y$ contains either one of the thick or one of the dotted edges as, otherwise, it must contain the edge that leads to vertex $C_1$ which is also contained in $x$. If $y$ contains one dotted arc that leads to some clause it must also contain the other dotted arcs that belong to this clause as $x$ contains the edges that connect the three parts of this clause. Hence, the argument from above is valid. If $y$ contains one of the thick arcs this arc must correspond to a conflicting literal assignment (with respect to the assignment of $x$), as every thick arc is connected to the contradicting assignment in the clause. The next edge of this path is contained in $x$, as $x$ represents a consistent literal assignment. 

On the other hand, if $x$ represents a literal assignment that violates at least one clause, there exists obviously a path $y$ using the corresponding dotted edges with $x^Ty=0$.

\medskip

To conclude the proof, we have to show the two open claims.

\medskip

\noindent
\textbf{Proof of Claim 1} \\
Let $x$ be an arbitrary path in $G$. Observe that not both nodes $l_1^+$ and $l_1^-$ can be contained in $x$. With out loss of generality let $l_1^+$  be not contained in $x$. We construct a path $y$ which shares at most one edge with $x$. Denote by $v$ the successor node of $C_1l_1^-$. Path $y$ starts with edge $(s,l_1^+)$ next it uses the chain of edges from $l_1^+$ to $C_1l_1^-$ and edge $(C_1l_1^-,v)$. If $x$ contains the chain of edges from $v$ to $t$, which are part of $G_2$, we continue path $y$ by an arbitrary path from $v$ to $t$ contained in $G_1$. In the other case, where the chain of edges from $v$ to $t$ is not contained in $x$, we continue path $y$ simply with this chain. The constructed path $y$ shares at most the edge $(C_1l_1^-,v)$ with $x$. This proves Claim~1. \\

\noindent
\textbf{Proof of Claim 2} \\
Let $x$ be an arbitrary path in $G$ with $S(x)=1$. We claim that $x$ must fulfill the following properties: $x$ must contain node $C_1$ and $x$ must contain at least one of the nodes $C_il_k^+$ or $C_il_k^-$. 

For the sake of contradiction assume first that $x$ does not contain $C_1$. Then there exists a path $y$ from $s$ to $C_1$ not sharing any edge with $x$. This path can easily be extended to an $s-t$ path sharing no edge with $x$ by either adding the path from $C_1$ to $C_1l_1^+$ to $t$ (using a chain of edges from $G_2$) or the path from $C_1$ to $C_1l_1^-$ to $t$ (using a chain of edges from $G_2$) respectively.

For the sake of contradiction assume without loss of generality that $C_1l_1^+$ and $C_1l_1^-$ are not contained in $x$. Again we construct a path $y$ that shares no edge with $x$. With out loss of generality assume that $l_1^+$ is not contained in $x$. Path $y$ starts with edge $(s,l_1^+)$, followed by the chain of edges from $G_2$ going from $l_1^+$ to $C_1l_1^-$, the edge $(C_1l_1^-,v)$ and the chain of edges from $G_2$ from $v$ to $t$. Note that $y$ shares no edge with $x$.

Note that the first possible node a path $x$ fulfilling both of these properties can leave $G_1$ is at the third part of the last clause node. Denote by $u$ the last node of $x$ contained in $G_1$ (except for $t$). Note that there is only a single path $\tilde{x}$ from $u$ to $t$ in $G_1$. Consider the following path $x'$. The first part from $s$ to $u$ coincides with $x$. The second part is equal to $\tilde{x}$. Note that $x'$ is contained in $G_1$ and shares edges with all paths $y$ that share edges with $x$. Hence, $S(x')\geq S(x)$. This concludes the proof of Claim~2.

Note that the presented reduction uses a $3$-SAT instance which consists of a single clause. The presented ideas generalize straightforward to the case of arbitrary $3$-SAT instances. To introduce an additional literal $l_n$, the first part of the graph $G_1$ is extended by $l_n^+$ and $l_n^-$. The gadget representing an additional clause $C_m$ has exactly the same structure as the gadget for $C_1$. The corresponding gadget of $C_m$ is put at the end of the graph.

\end{proof}

Note that the instance used for the reduction in the proof of Theorem~\ref{thm_sp_sol_ell} is strongly restricted: All edges have the same cost structure, the uncertainty set is a perfect ball and all $s-t$ paths contain the same number of edges. The complexity results of this section are summarized in Table~\ref{tab:sp}.

\begin{table}[h]
\centering
\begin{tabular}{|c|c|c|c|c|}
\hline
& Interval & Axis-Parallel Ellipsoid & General Ellipsoid & Finite  \\ \hline
Eval & \textsf{P} & \textsf{NPC} (Thm.~\ref{thm_sp_eval_ell}) & \textsf{NPC} (Thm.~\ref{thm_sp_eval_ell}) & \textsf{P} \\ 
Solve & \textsf{NPC} & \textsf{NPH} (Thm.~\ref{thm_sp_sol_ell}) & \textsf{NPH} (Thm.~\ref{thm_sp_sol_ell}) & \textsf{NPC} \\ \hline
\end{tabular}
\caption{Overview of the different complexity results of the minmax regret shortest path problem.}\label{tab:sp}
\end{table}

\section{Solution Approaches}
\label{sec:exact}

In this section we discuss solution approaches for the minmax regret problem with ellipsoidal uncertainty sets. We begin with briefly revisiting the scenario relaxation procedure for interval uncertainty in Section~\ref{sec:relint}, before introducing exact solution approaches for ellipsoidal sets in Section~\ref{sec:relell}.

\subsection{Scenario Relaxation for Interval Sets}
\label{sec:relint}

For combinatorial minmax regret problems with interval uncertainty sets, one of the most frequently used solution method is to generate a finite set of scenarios iteratively (see \cite{Aissi2009}). There are (at least) two ways to do so.
We briefly explain them in the following.

A general minmax regret problem of the form
\[ \min_{x\in\X} \max_{c\in\cU} \left( c^Tx - opt(c) \right) \]
can be rewritten as:
\begin{align*}
\min\ & z \\
\text{s.t. } & z \ge c^Tx - c^T y &\forall c\in\cU, y\in \X\\
& x \in \X
\end{align*}
In case of an interval uncertainty set, these are infinitely many constraints. Even restricting ourselves to extreme points of the uncertainty set, there are still exponentially many. For this reason, we generate them iteratively during the solution process.

\noindent
Let us consider the constraints
\[ \Big( z \ge c^Tx - c^T y \ \forall y\in \X \Big) \qquad \forall c\in\cU \]
with $\cU = \bigtimes_{i\in[n]} [\underline{c}_i, \oc_i]$. If we fix some $c\in\cU$, we can read this as
\[ z \ge \max_{y\in\X} \left( c^Tx - c^Ty \right) \]
which is equivalent to 
\begin{equation}
z \ge c^Tx - opt(c). \label{cut-1}
\end{equation}
That is, we can iteratively generate scenarios $c\in\cU$ and add constraints of the form \eqref{cut-1} to solve the robust problem. To find the the next $c\in\cU$ in each iteration (that is, a maximizer of the right-hand side of \eqref{cut-1}), one simply uses $c^*(x)$, with $c^*(x)_i := \underline{c}_i + (\overline{c}_i-\underline{c}_i)x_i$. 
To find $opt(c^*(x))$, a problem of the nominal type needs to be solved. We refer to constraints of this kind as type 1 cuts.

\noindent
Analogously, we can consider constraints of the form
\[ \Big( z \ge c^Tx - c^T y \ \forall c\in\cU \Big) \qquad \forall y\in \X. \] 
That is, for fixed $y\in\X$, let us consider 
\[ z \ge \max_{c\in\cU} \left( c^Tx - c^T y \right) \]
in more detail. This is equivalent to setting
\begin{equation}
z \ge c^*(x)^Tx - c^*(x)^Ty. \label{cut-2}
\end{equation}
It is then possible to rewrite $c^*(x)$ such that this becomes a linear integer program. We refer to constraints of this kind as type 2 cuts. To find the next such cut, we need to solve a nominal problem with $c^*(x)$, just like for type 1. 

Note that type 2 cuts are more ''flexible'' in the sense that they only fix a solution $y$, and use the worst-case scenario depending on $x$. For type 1 cuts, both scenario $c$ and solution $y$ are fixed. For this reason, it can be shown that type 2 cuts are more efficient (tighter) than type 1 cuts \cite{Aissi2009}.

\subsection{Solution Approaches for Ellipsoidal Sets}
\label{sec:relell}

We now consider minmax regret problems with general ellipsoidal uncertainty sets $\cU = \{ \hat{c} + C\xi : \Vert \xi \Vert_2 \le 1 \}$. As in the case of interval uncertainty sets, we consider two ways to reformulate the constraints
\[ z \ge c^Tx - c^T y \qquad \forall c\in\cU, y\in \X \]
First, let us fix $c\in\cU$. Then, just as before, the constraints become equivalent to
\[ z\ge \max_{y\in\X} \left( c^Tx - c^Ty \right) \qquad \iff \qquad z \ge c^Tx - opt(c). \] 
However, generating the next such constraint for a given $x\in\X$ is more complex. We need to solve the problem of finding the largest such cut, that is,
\[ \max_{c\in\cU} \left( c^Tx - opt(c) \right). \]
This is equivalent to:
\begin{align*}
\max \ & c^Tx - c^T y \\
\text{s.t. } & c = \hat{c} + C\xi \\
& \Vert \xi \Vert_2 \le 1 \\
& y\in \X
\end{align*}
Using Lemma~\ref{lem1a}, we find that this problem is equivalent to 
\begin{align}
\max\ &\hat{c}^T(x-y) + z \nonumber\\
\text{s.t. } & z^2 \le \Vert C^T (x-y) \Vert_2^2 \tag{SUB}\\
& y\in \X, z \ge 0.\nonumber
\end{align}
To solve problem (SUB), we consider two linearizations of the right-hand side. In our first approach, we use that $x_i = x_i^2$ for binary variables $x_i$ and find that
\[ \Vert C^T (x-y) \Vert_2^2 = \sum_{i\in[n]} \sum_{j\in[n]} \left( C^2_{ji} (x_j - 2x_j y_j + y_j) +  \sum_{k<j} 2C_{ji}C_{ki}(x_j-y_j)(x_k-y_k) \right) \]
To linearize products of the form $y_jy_k$, we introduce new binary variables $\alpha_{jk}$ with
\[ y_j + y_k \le 1 + \alpha_{jk} \qquad \text{and} \qquad 2\alpha_{jk} \le y_j + y_k. \]
Using this linearization of the right-hand side in (SUB), we arrive at a convex quadratic integer program.

As a second approach, we rewrite the constraint as
\[ \Vert C^T (x-y) \Vert_2^2 = v^T Q v = \sum_{i\in[n]} v_i a_i(v) \]
with $v_i := x_i - y_i$, $Q := CC^T$ and $a_j(v) := (Qv)_j = \sum_{i\in[n]} q_{ji} v_i$. We introduce new variables $h_j := v_j a_j(v)$ and linearize them using the following constraints. For any $j\in[n]$ with $v_j \in \{0,1\}$ (i.e., $x_j = 1$) we set
\[ h_j \le \sum_{i\in[n]} q_{ji} v_i + M^-_j (1-v_j) \qquad \text{ and } \qquad h_j \le M^+_j v_j. \]
For any $j\in [n]$ with $v_j \in \{-1,0\}$ (i.e., $x_j = 0$), we use instead
\[ h_j \le - \sum_{i\in[n]} q_{ji} v_i + M^+_j (1 + v_j) \qquad \text{ and } \qquad h_j \le -M^-_j v_j. \]
The constants $M^+_j$ and $M^-_j$ are chosen such that $M^+_j \ge \max_v \sum_{i\in[n]} q_{ji} v_i$ and $M^-_{ij} \ge - \min_v \sum_{i\in[n]} q_{ji} v_i$. To this end, we set $M^+_j := \sum_{i\in[n]} q_{ji} x_i$ and $M^-_j := \sum_{i\in[n]} q_{ji} (1-x_i)$ as the smallest possible such constants.

Note that the second linearization requires less additional variables (linearly instead of quadratically many), but is numerically less stable due to the ''big-$M$'' constraints.

Solving (SUB) we find $y^*$,
and the corresponding $c^*$ is given by $\hc + C\xi^*$ with $\xi^* = C^T(x-y^*)/\Vert C^T (x-y^*) \Vert_2$.

As for interval uncertainty sets, we refer to this procedure as type 1 cuts.

\bigskip

For the second type of cuts, we fix $y\in\X$, in which case our constraints become
\[ z \ge c^Tx - c^T y \qquad \forall c\in\cU \]
which is a ''classic'' robust optimization constraint, i.e., using Lemma~\ref{lem1a} it can be reformulated to
\[ z \ge \hat{c}^T (x-y) + \Vert C^T (x-y) \Vert_2. \]
This is again a conic quadratic constraint. To generate new cuts of this form, we maximize the right-hand-side in $y$, which is the same subproblem as described in (SUB).

To summarize, both approaches need to solve the same subproblem to generate new cuts. Using cuts of type 1 amounts to master problems that are integer linear, while cuts of type 2 amount to master problems that are
second order cone integer.
In principle, master problems for type 2 are therefore harder to solve. However, they have the advantage that they give a tighter formulation.

\begin{theorem}
Cuts of type 2 are tighter than cuts of type 1.
\end{theorem}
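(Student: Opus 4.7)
The plan is to show that every type 1 cut is dominated pointwise (in $x$) by a corresponding type 2 cut, so that replacing any type 1 cut by its companion type 2 cut can only shrink the feasible region of the master problem. This is the natural formalization of "tighter" for a scenario relaxation.

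First, I would pin down the two cuts generated from the same subproblem solution. Fix any iterate $x$, and let $(y^*, \xi^*)$ arise from solving (SUB) at $x$: namely $y^* \in \X$ is the optimizer and $\xi^* = C^T(x-y^*)/\Vert C^T(x-y^*)\Vert_2$ (handling the degenerate case $C^T(x-y^*) = 0$ trivially). Then the type 1 cut uses the scenario $c^* = \hat{c} + C\xi^*$ together with the optimal response $y^*$, yielding
\[ z \ge (c^*)^T x - (c^*)^T y^* = \hat{c}^T(x-y^*) + (\xi^*)^T C^T(x-y^*). \]
The type 2 cut generated at the same $y^*$ is
\[ z \ge \hat{c}^T(x-y^*) + \Vert C^T(x-y^*) \Vert_2. \]

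Next I would compare these two linear functions of $x$ using Cauchy--Schwarz. For every $x$,
\[ (\xi^*)^T C^T(x-y^*) \le \Vert \xi^* \Vert_2 \cdot \Vert C^T(x-y^*) \Vert_2 \le \Vert C^T(x-y^*) \Vert_2, \]
since $\Vert \xi^* \Vert_2 \le 1$. Hence the right-hand side of the type 2 cut is at least the right-hand side of the type 1 cut for every $x$, so the type 2 cut implies the type 1 cut. Consequently, if at any iteration of the cutting-plane algorithm we replace the generated type 1 cut by its companion type 2 cut, the master's feasible region can only shrink (equivalently, the lower bound on the optimal regret can only increase).

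Finally, to close the argument one should observe that the argument does not depend on $y^*$ being obtained from any particular algorithmic choice: for \emph{any} scenario $c = \hat{c} + C\xi$ (with $\Vert \xi \Vert_2 \le 1$) and any $y \in \arg\min_{y'\in\X} c^T y'$, the type 2 cut at this $y$ dominates the corresponding type 1 cut by the same Cauchy--Schwarz estimate. I expect the only subtlety, rather than an obstacle, to be a careful statement of "tighter": it is the pointwise domination of the affine lower bound on $z$ by a conic quadratic lower bound, which via the inclusion of feasible regions yields a master problem whose optimal value is an at least as good bound on the true optimal regret as the one obtained from type 1 cuts.
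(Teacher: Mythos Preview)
Your proposal is correct and follows essentially the same route as the paper. The paper fixes $x$, takes $c$ and $y$ from (SUB), and uses the one-line chain $c^Tx-opt(c)=c^T(x-y)\le\max_{c'\in\cU}{c'}^T(x-y)=\hat{c}^T(x-y)+\Vert C^T(x-y)\Vert_2$; your Cauchy--Schwarz estimate is exactly the same inequality unpacked through the parametrization $c=\hat{c}+C\xi$, and your observation that the domination holds for \emph{every} $x$ (not just the current iterate) is a welcome clarification of what ``tighter'' means.
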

\begin{proof}
Let some $x\in\X$ be fixed, and let $c$ and $y$ be generated from the subproblem (SUB). Then we have
\begin{align*}
c^T x - opt(c) &= c^Tx - c^Ty\\
& \le \max_{c'\in\cU} \left( {c'}^Tx - {c'}^Ty \right) = \hat{c}^T(x-y) + \Vert C^T (x-y) \Vert_2
\end{align*}
\end{proof}

We conclude this section by considering an approximation algorithm. As one can easily see, $\cU$ is symmetric with respect to $\hat{c}$. Using Property 3.3 from \cite{Conde2012452}, we get the following result.

\begin{theorem}
The midpoint solution
\[ \hat{x} \in \arg \min \{ \hat{c}^T x : x\in\X \} \]
is a 2-approximation for the minmax regret problem with ellipsoidal uncertainty set.
\end{theorem}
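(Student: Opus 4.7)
The plan is to exploit the central symmetry of the ellipsoidal set $\cU = \{\hc + C\xi : \|\xi\|_2 \le 1\}$ around the midpoint $\hc$: if $c^0 = \hc + C\xi \in \cU$, then its reflection $c' := 2\hc - c^0 = \hc - C\xi$ also lies in $\cU$, since $\|-\xi\|_2 = \|\xi\|_2 \le 1$. This mirror pairing is essentially the only structural fact used; the rest is a short arithmetic chain driven by the identity $c^0 + c' = 2\hc$.

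Concretely, I would fix an optimal regret solution $x^*$ and pick $c^0$ to be a worst-case scenario for $\hat x$, so that $Reg(\hat x,\cU) = (c^0)^T\hat x - opt(c^0)$. Evaluating the max in the definition of $Reg(x^*,\cU)$ separately at both $c^0$ and its mirror $c'$ produces two lower bounds on $Reg(x^*,\cU)$. Summing them and using $c^0 + c' = 2\hc$ collapses the right-hand side to
\[ 2\,Reg(x^*,\cU) \;\ge\; 2\hc^T x^* - opt(c^0) - opt(c'). \]

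From there, two elementary inequalities close the argument. Optimality of $\hat x$ for the midpoint problem yields $\hc^T x^* \ge \hc^T \hat x$; feasibility of $\hat x$ as a candidate in the nominal problem with cost $c'$ yields $opt(c') \le (c')^T \hat x$. Substituting both into the previous bound and observing that $2\hc - c' = c^0$ makes the right-hand side telescope back to $(c^0)^T \hat x - opt(c^0) = Reg(\hat x,\cU)$, which gives the desired $Reg(\hat x,\cU) \le 2\,Reg(x^*,\cU)$.

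The only step that requires any thought is the initial pairing $c^0 \leftrightarrow c'$; once that is in place, the calculation essentially writes itself. Since the argument uses only central symmetry of $\cU$ and not its specific ellipsoidal shape, the result is really a direct specialisation of Property~3.3 in \cite{Conde2012452} to ellipsoidal uncertainty.
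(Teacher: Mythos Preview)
Your proof is correct and matches the paper's approach: the paper simply observes that the ellipsoid $\cU$ is centrally symmetric about $\hc$ and invokes Property~3.3 of \cite{Conde2012452}, while you spell out that very argument in full (and even note the connection yourself at the end). There is no substantive difference --- you have just unpacked the cited result rather than quoting it.
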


\section{Computational Experiments}
\label{sec:experiments}

The purpose of these experiments it to compare the performance of type 1 and type 2 cuts for general ellipsoidal uncertainty sets, using one of the two linearizations for problem (SUB). To this end, we use both unconstrained and shortest path problems as a testbed.

\subsection{Setup}

We generate uncertain unconstrained problems of the form
\[ \min \left\{ c^T x : x\in\{0,1\}^n \right\} \]
by creating random ellipsoidal uncertainty sets $\cU$. For all instances, we generate $\hc_i \in \{-100,\ldots,100\}$ and $C_{ii} \in \{50,\ldots,150\}$. Additionally, non-diagonal entries of $C$ are generated in three different ways:
\begin{itemize}
\item Sets with small deviation, where $C_{ij} \in \{1,\ldots,50\}$

\item Sets with medium deviation, where $C_{ij} \in \{1,\ldots,50\}$ with a probability of $75\%$, and in $C_{ij} \in \{50,\ldots,200\}$ with $25\%$.

\item Sets with large deviation, where $C_{ij} \in \{50,\ldots,200\}$. 
\end{itemize}
Parameters were always generated uniformly at random from the respective sets of possible outcomes. Each non-diagonal entry is generated with a certain probability $p\in\{5\%,15\%,25\%\}$. 
For each number of items $n$ in $\mathcal{N}=\{10 + 20N : N\in\{0,\ldots,7\}\}$ we therefore generated nine instance sets, which which we denote as $\cI^{p,y}_n$ with $n$ items and $y\in\{s,m,l\}$ for small, medium, and large deviation, respectively. We abbreviate $\cI^s$, $\cI^m$, $\cI^l$ and $\cI^5$, $\cI^{15}$, $\cI^{25}$ to denote all instances of the respective type (i.e., $\cI^m$ denotes all instances with medium deviation, and $\cI^5$ denotes all instances where non-diagonal entries are generated with $5\%$ probability).
For each instance set, we generated 10 instances, which means a total of 720 instances were considered.

\medskip

Additionally, we generated a second set of test instances for shortest path problems.
All parameters are chosen in the same way as for the unconstrained problems. The graphs we consider are layered graphs with 4 nodes per layer, and $n$ layers with $n\in\{2,\ldots,9\}$. Between two layers, all possible forward edges were generated.
We denote these instances as $\cJ^{p,y}_n$, with the same abbreviations as for $\cI$. For each instance set, 10 instances were generated (720 instances in total).

We use the two scenario relaxation procedures described in Section~\ref{sec:exact}. In the following, we denote the solution approach that uses type 1 cuts of the form
\[ z \ge c^Tx - opt(c) \]
as C1, and the approach based on type 2 cuts of the form
\[ z \ge \hat{c}^T (x-y) + \Vert C^T (x-y) \Vert_2 \]
as C2. Recall that C1 generates master problems that are likely to be easier to solve, while C2 has tighter bounds and might need less iterations. Depending on how the subproblem (SUB) is linearized, we append either ''-A'' (for the first linearization with quadratically many variables) or ''-B'' (for the second linearization with linearly many variables) to the name of the method.

We used CPLEX v.12.6 \cite{cplex} to solve all linear and quadratic integer programs on a computer with a 16-core Intel Xeon E5-2670 processor, running at 2.60 GHz with 20MB cache, and Ubuntu 12.04. Processes were pinned to one core. A time limit of 900 seconds was used per method and instance.

\subsection{Experiment 1: Unconstrained Problems}

Figure~\ref{fig:ex0all} shows the resulting performance profile over all 720 unconstrained instances, i.e., at every time step, we plot how many instances have been solved to optimality. Plotted in black is C1, while C2 is in blue. Method A linearization of sub is a full line, and method B linearization is a dashed line. In Figure~\ref{fig:ex0}, the performance is shown over different instance classes.

\begin{figure}[htbp]
\centering
\includegraphics[width=\textwidth]{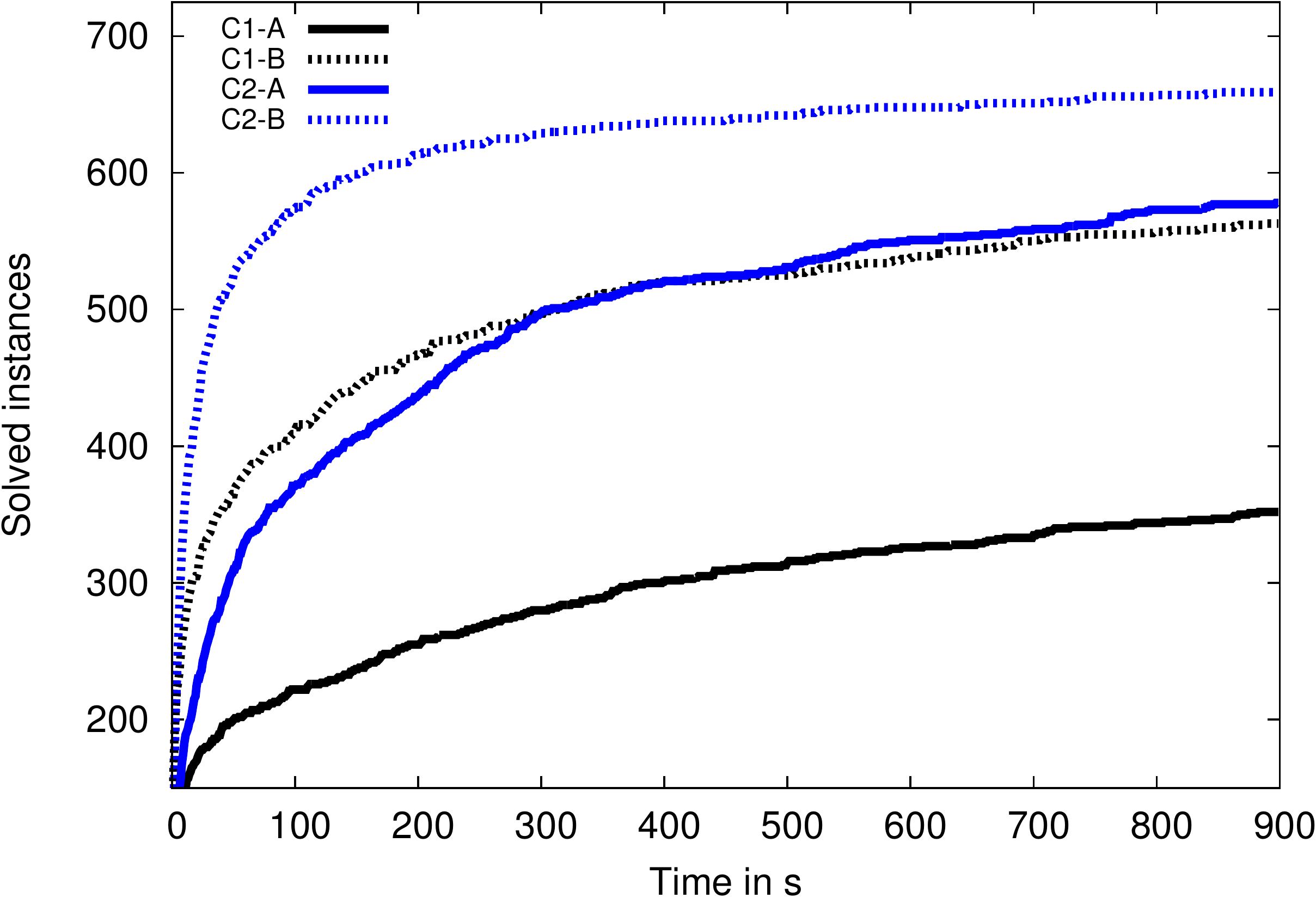}
\caption{Performance profile for unconstrained problems, all instances.}\label{fig:ex0all}
\end{figure}

\begin{figure}[htbp]
\centering
\subfigure[Instances $\cI^s$.\label{plot-small-0}]{\includegraphics[width=.5\textwidth]{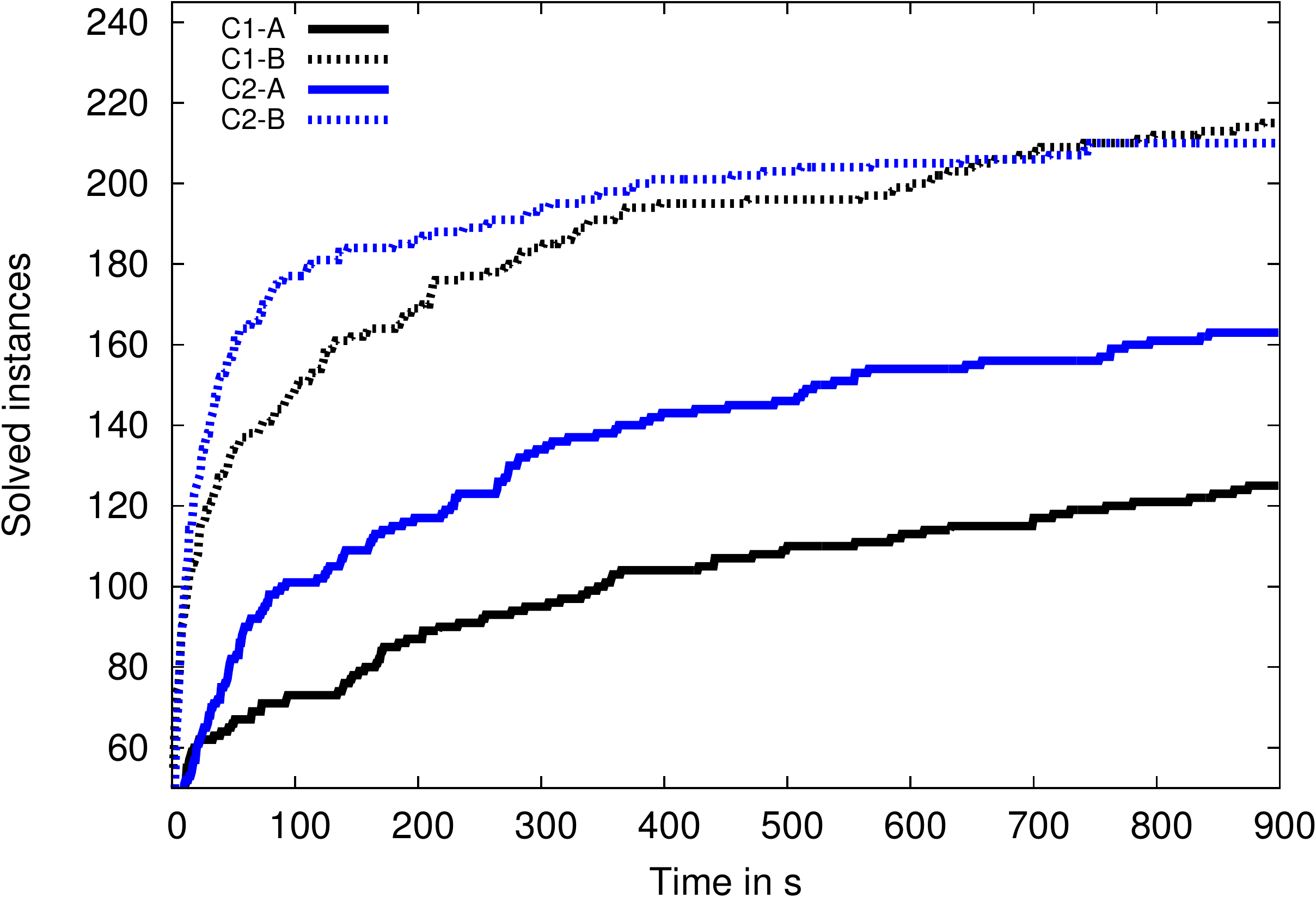}}%
\subfigure[Instances $\cI^m$.\label{plot-medium-0}]{\includegraphics[width=.5\textwidth]{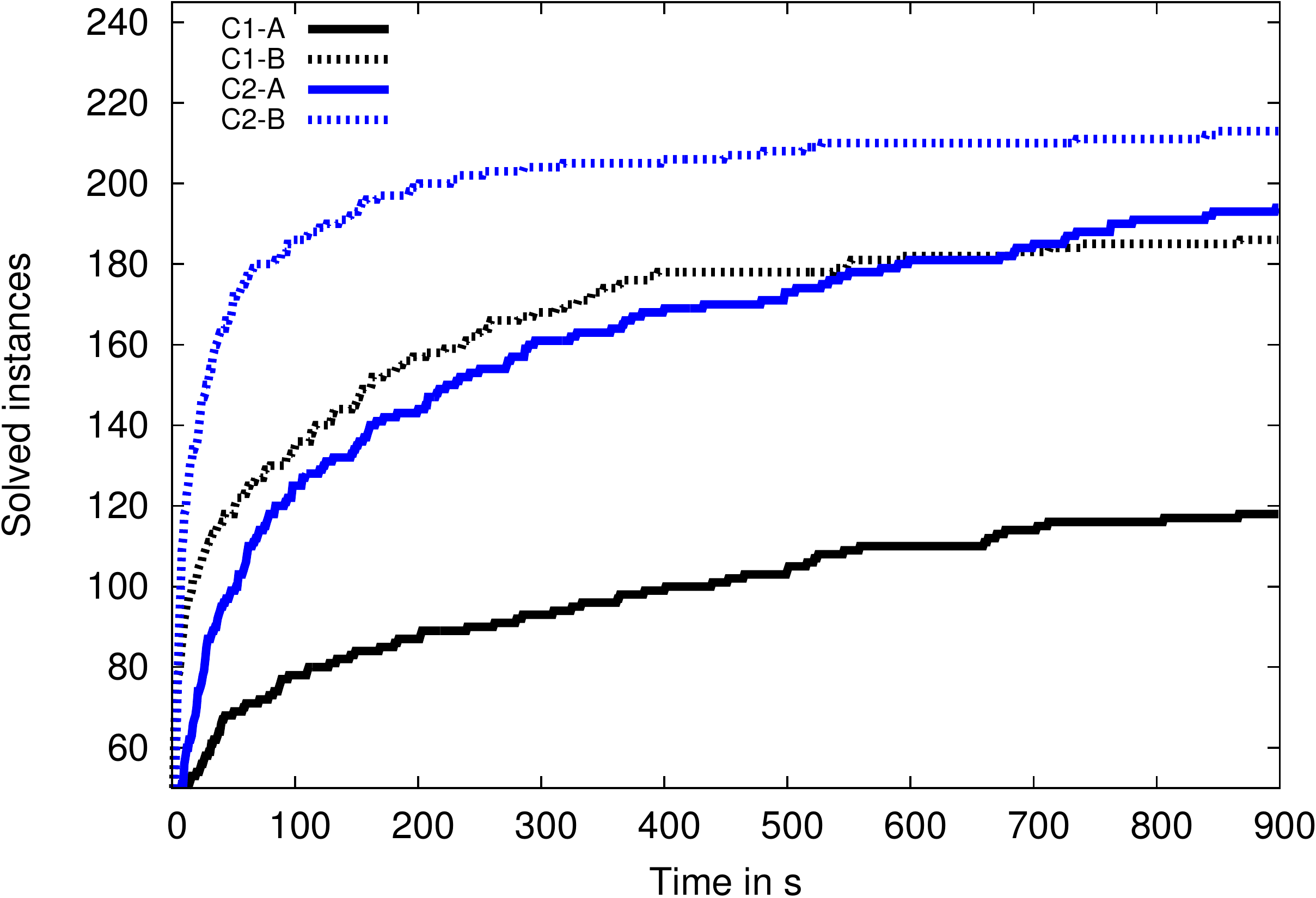}}

\subfigure[Instances $\cI^l$.\label{plot-large-0}]{\includegraphics[width=.5\textwidth]{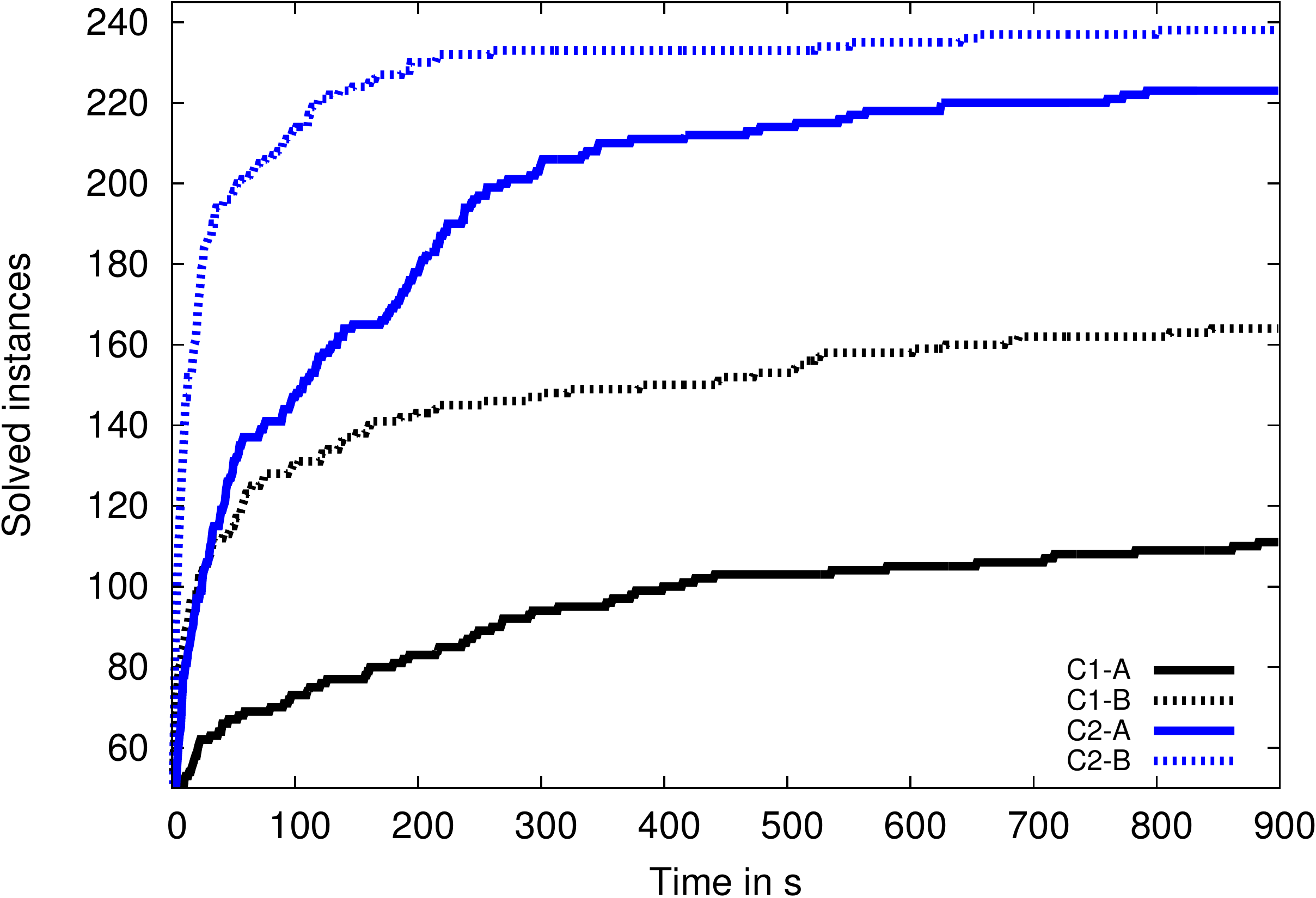}}%
\subfigure[Instances $\cI^5$.\label{plot-5-0}]{\includegraphics[width=.5\textwidth]{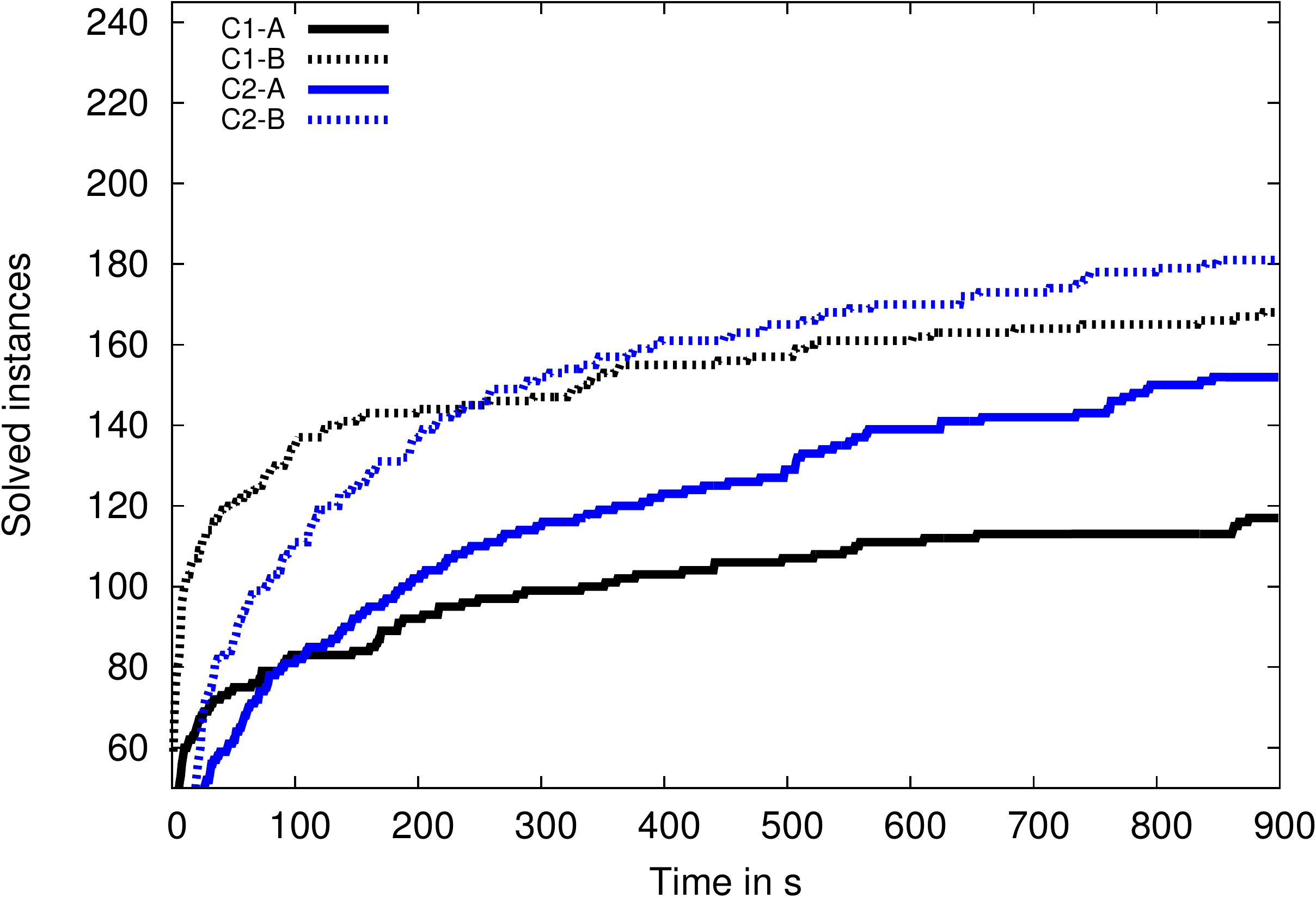}}

\subfigure[Instances $\cI^{15}$.\label{plot-15-0}]{\includegraphics[width=.5\textwidth]{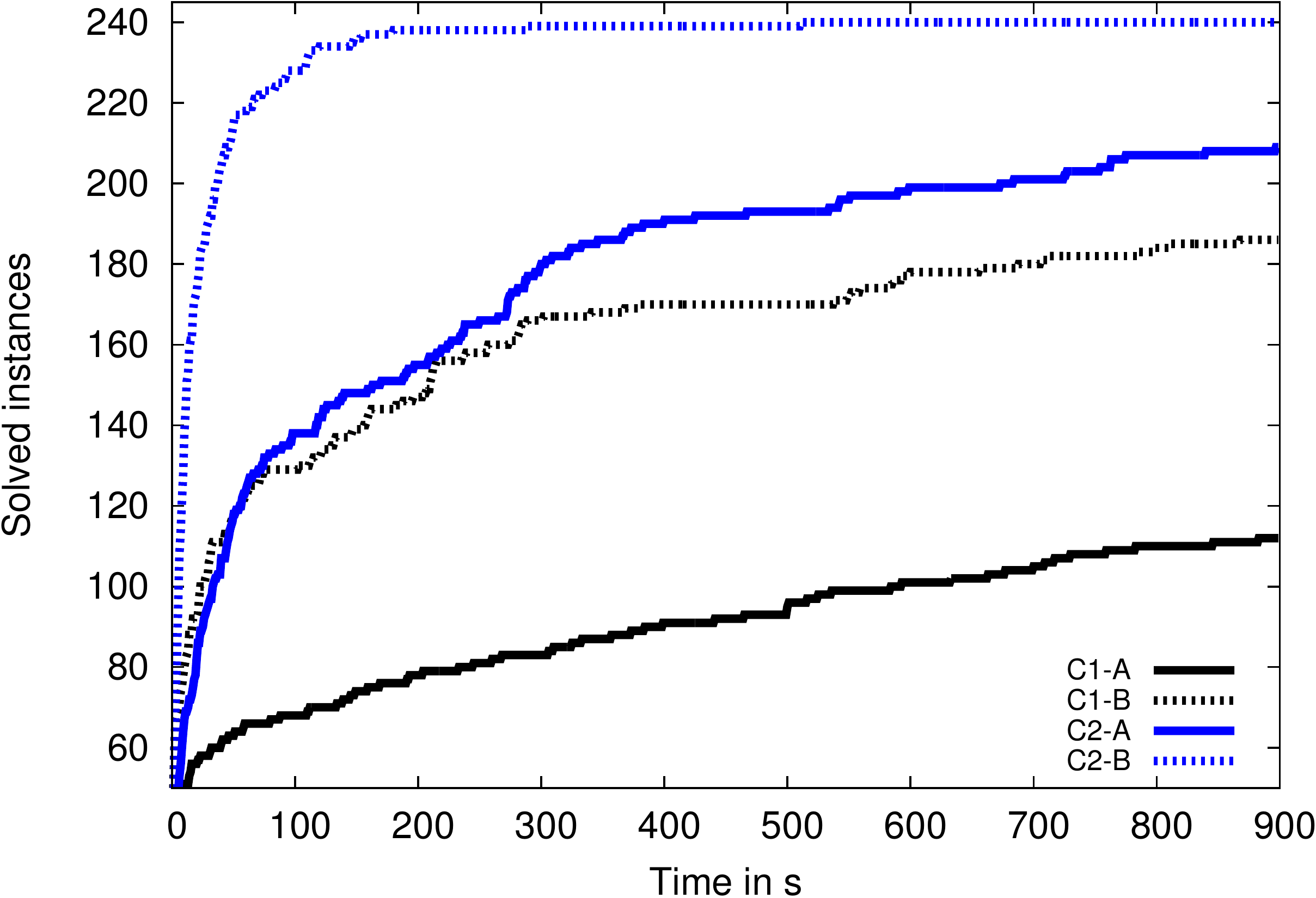}}%
\subfigure[Instances $\cI^{25}$.\label{plot-25-0}]{\includegraphics[width=.5\textwidth]{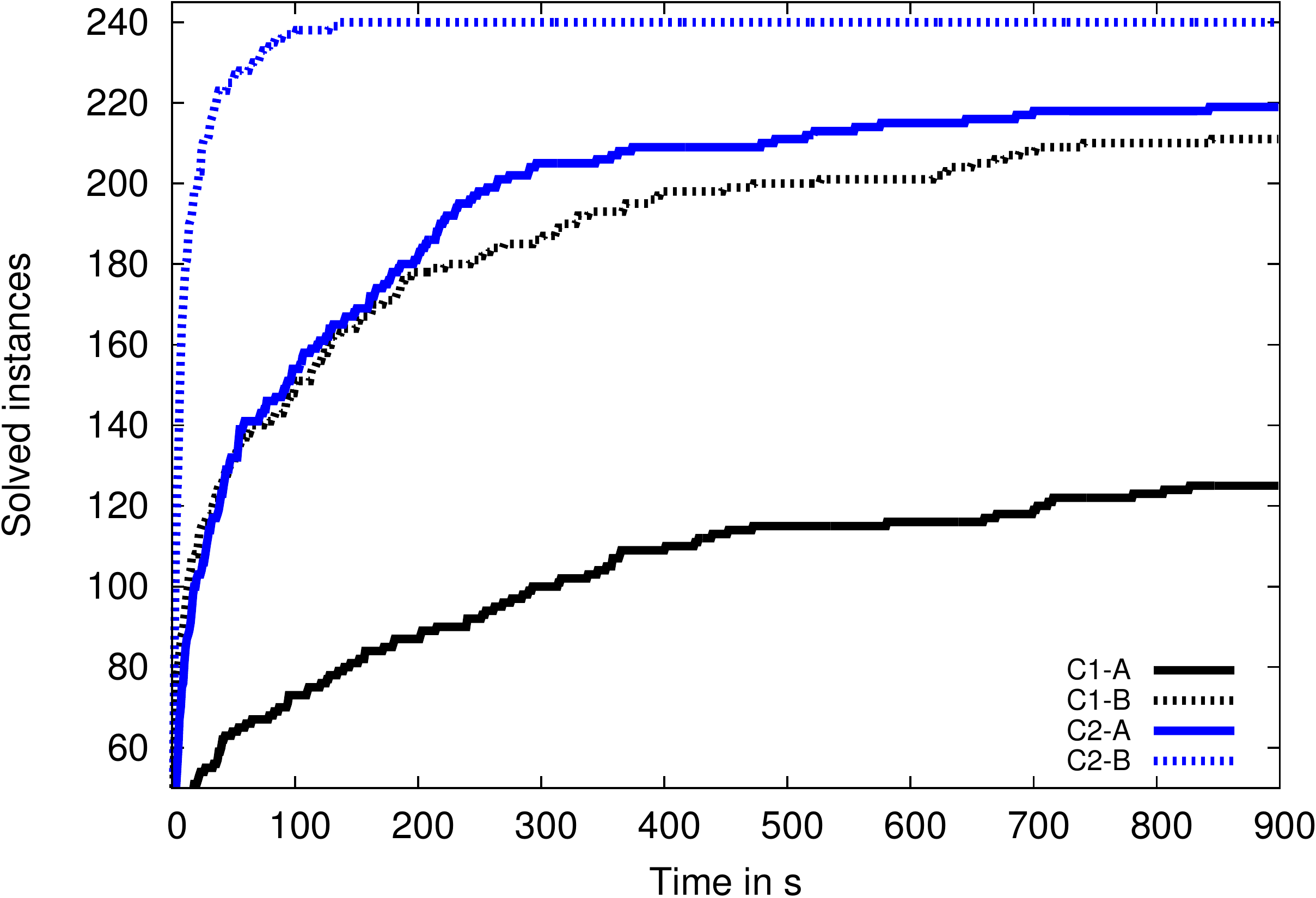}}
\caption{Performance profile for unconstrained problems.}\label{fig:ex0}
\end{figure}

The results indicate that method B clearly outperforms method A to solve subproblems. As C1 requires more cuts (and therefore the subproblem is solved more often), using the better method gives an even larger performance improvement than for C2. However, method B is numerically less stable due to the bigM constants, which are particularly large when the matrix $C$ is dense. For five instances, the subproblem could not be solved by Cplex due to numerical instability, which we counted as if the time limit of 900 seconds was reached for the purpose of this evaluation.

Furthermore, type 2 cuts outperform type 1 cuts in this experiment. Only when the density in matrix $C$ is low and for small deviation instances, there is a short advantage of C2 over C1. We present more detailed tables in Appendix~\ref{appendix}. There it can be seen that for less and smaller entries in $C$, more type 2 cuts need to be generated. In fact, if $C$ is dense enough and its values sufficiently large, only two solutions $y$ need to be constructed, namely $y=0$ and $y=1$, to solve the minmax regret problem, leading to a strong performance of C2 in these cases.

Overall, solution approach C2-B shows the best performance to solve the minmax regret problem with ellipsoidal uncertainty on the instances we considered here.

\subsection{Experiment 2: Shortest Path Problems}

We now consider the performance of our algorithms on shortest path instances. In Figure~\ref{fig:ex1all}, we present performance profile over all 720 instances, and a more differentiated view on instance classes in Figure~\ref{fig:ex1}.

\begin{figure}[htbp]
\centering
\includegraphics[width=\textwidth]{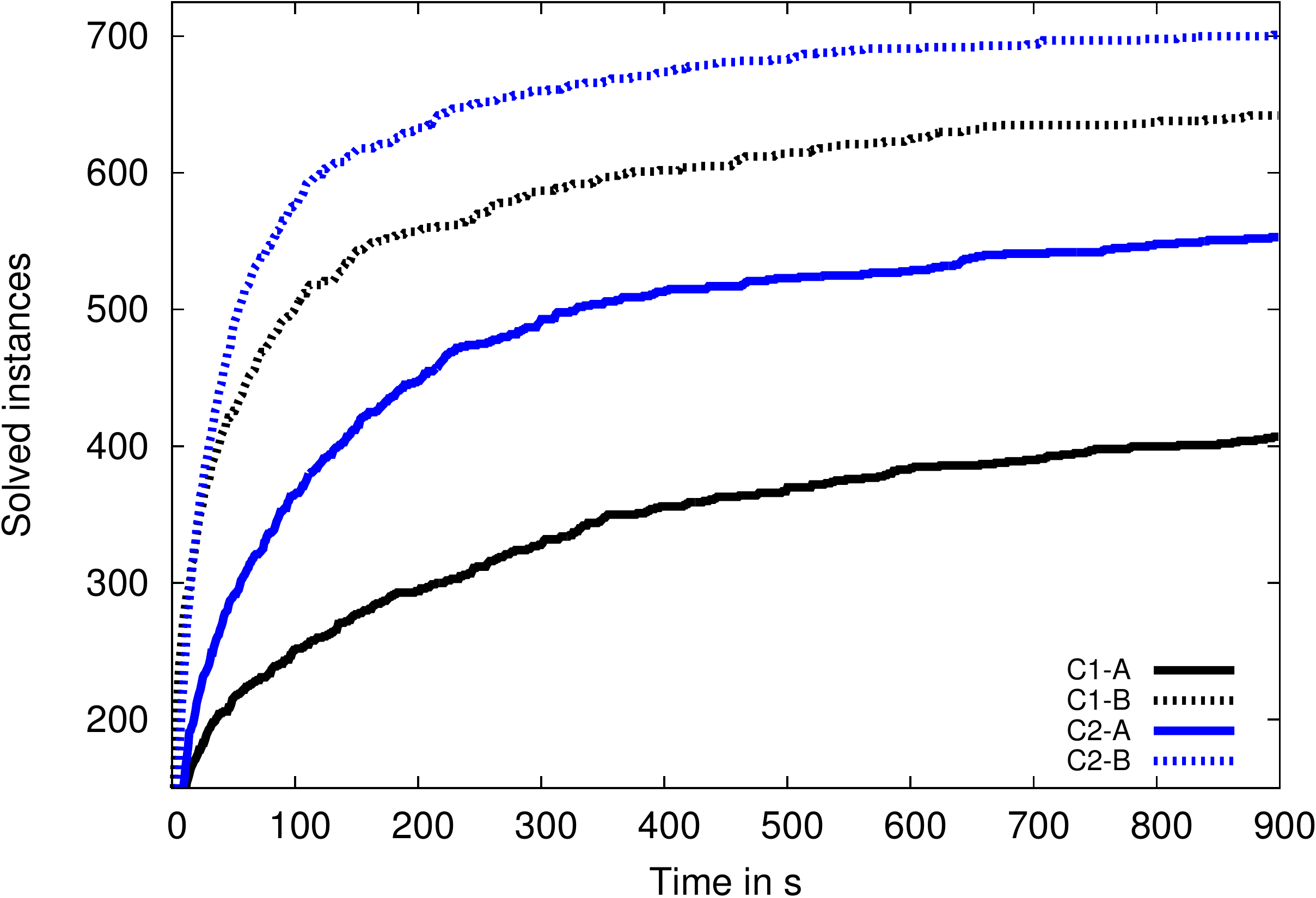}
\caption{Performance profile for shortest path problems, all instances.}\label{fig:ex1all}
\end{figure}

\begin{figure}[htbp]
\centering
\subfigure[Instances $\cJ^s$.\label{plot-small-1}]{\includegraphics[width=.5\textwidth]{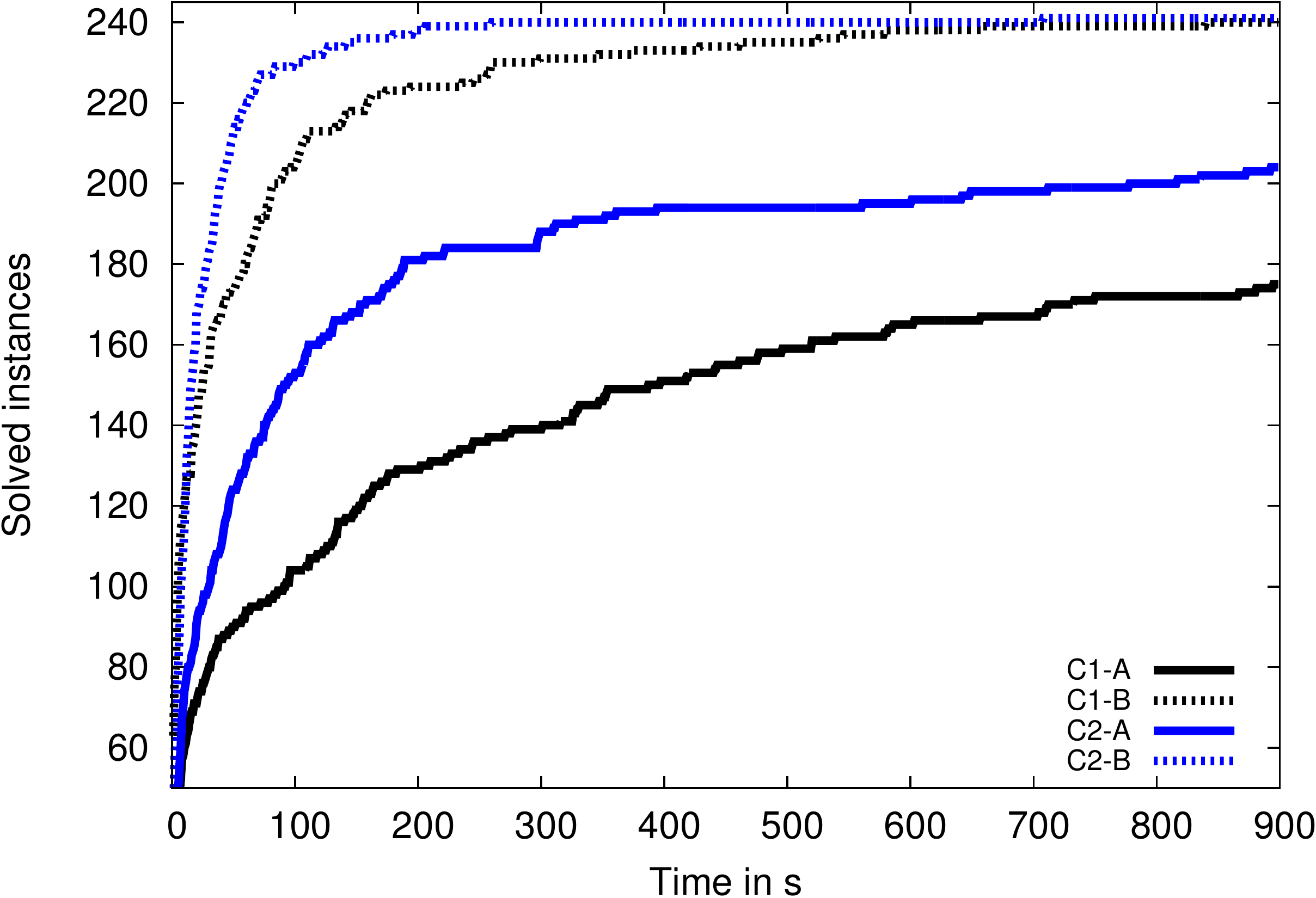}}%
\subfigure[Instances $\cJ^m$.\label{plot-medium-1}]{\includegraphics[width=.5\textwidth]{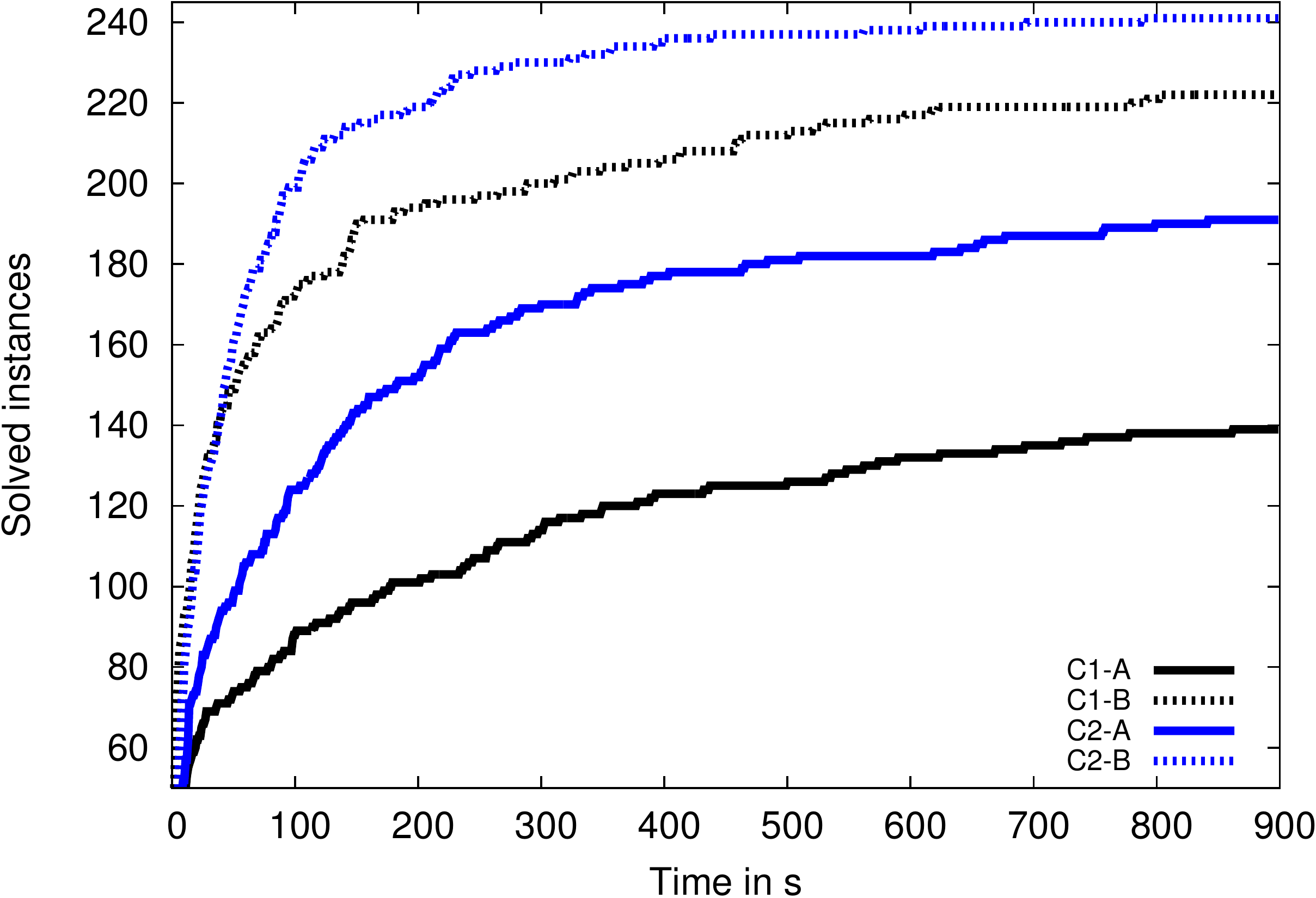}}

\subfigure[Instances $\cJ^l$.\label{plot-large-1}]{\includegraphics[width=.5\textwidth]{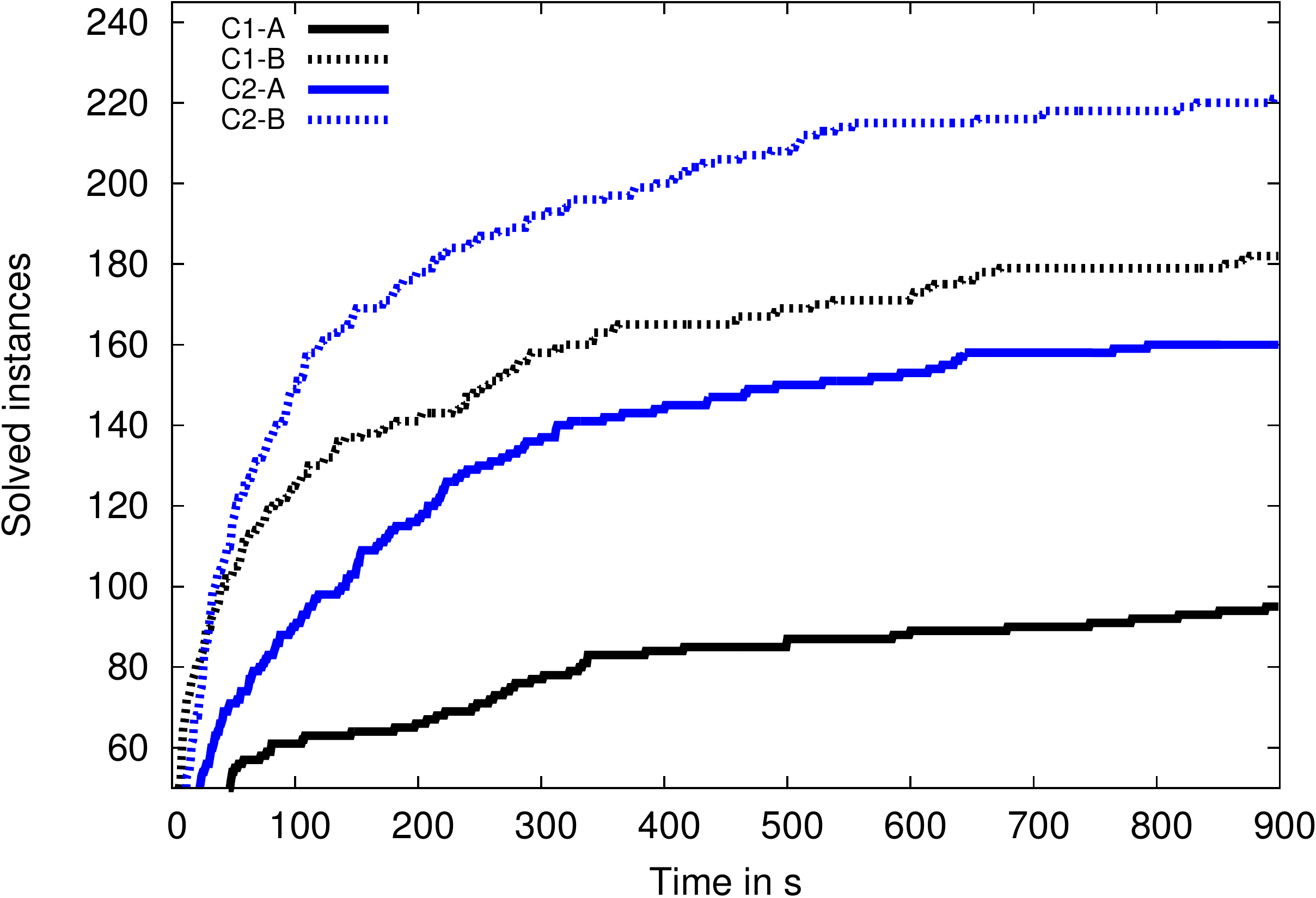}}%
\subfigure[Instances $\cJ^5$.\label{plot-5-1}]{\includegraphics[width=.5\textwidth]{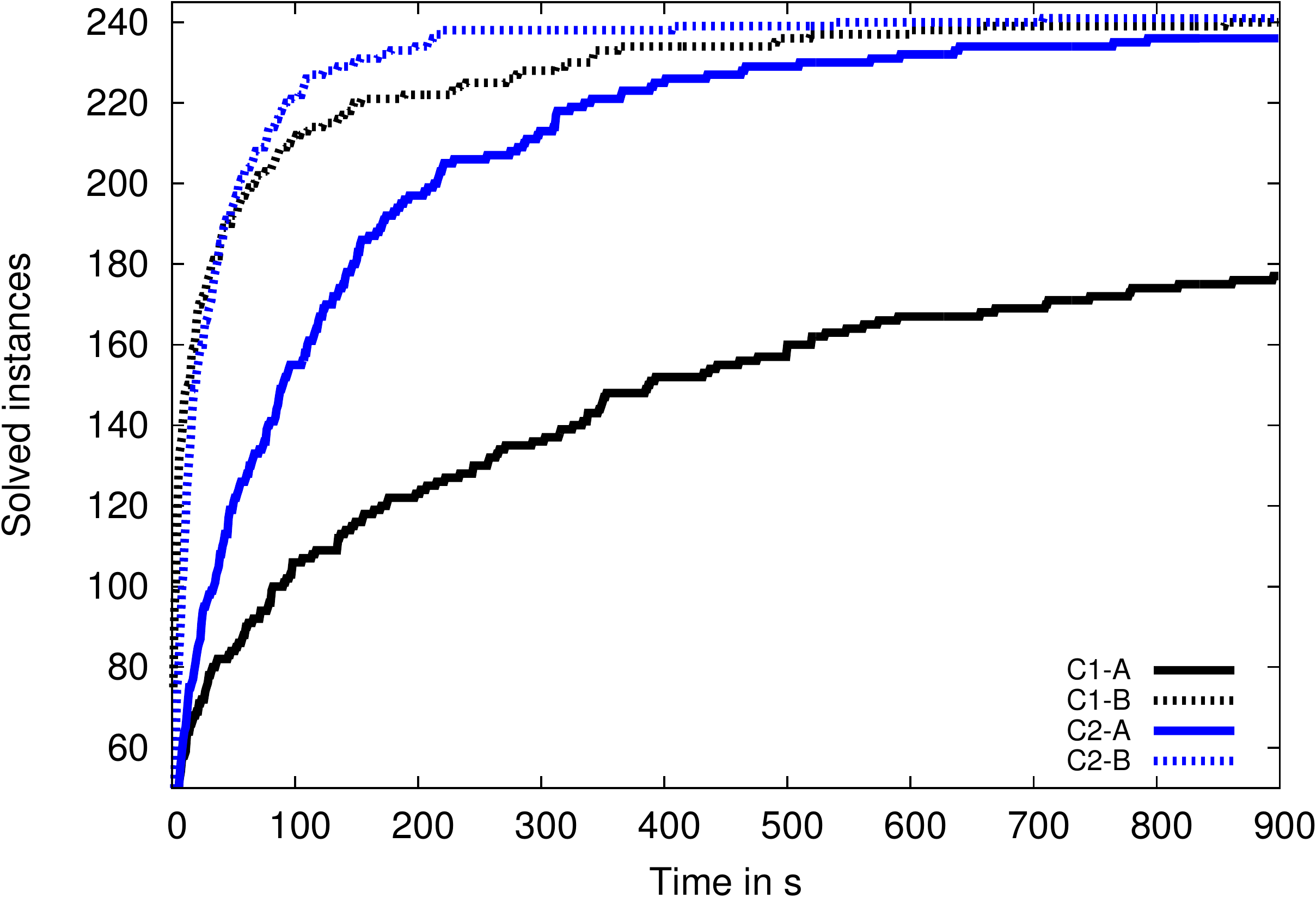}}

\subfigure[Instances $\cJ^{15}$.\label{plot-15-1}]{\includegraphics[width=.5\textwidth]{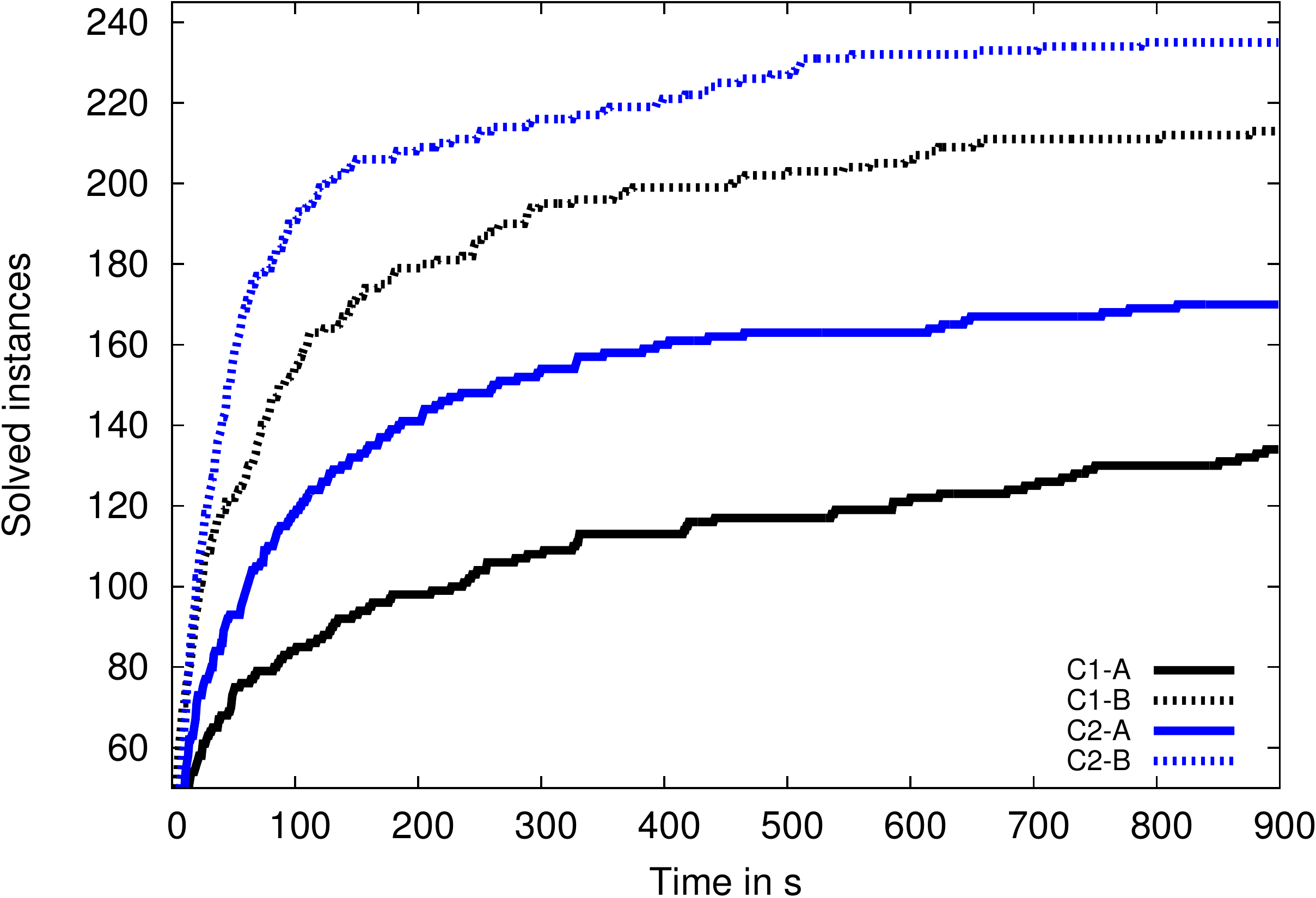}}%
\subfigure[Instances $\cJ^{25}$.\label{plot-25-1}]{\includegraphics[width=.5\textwidth]{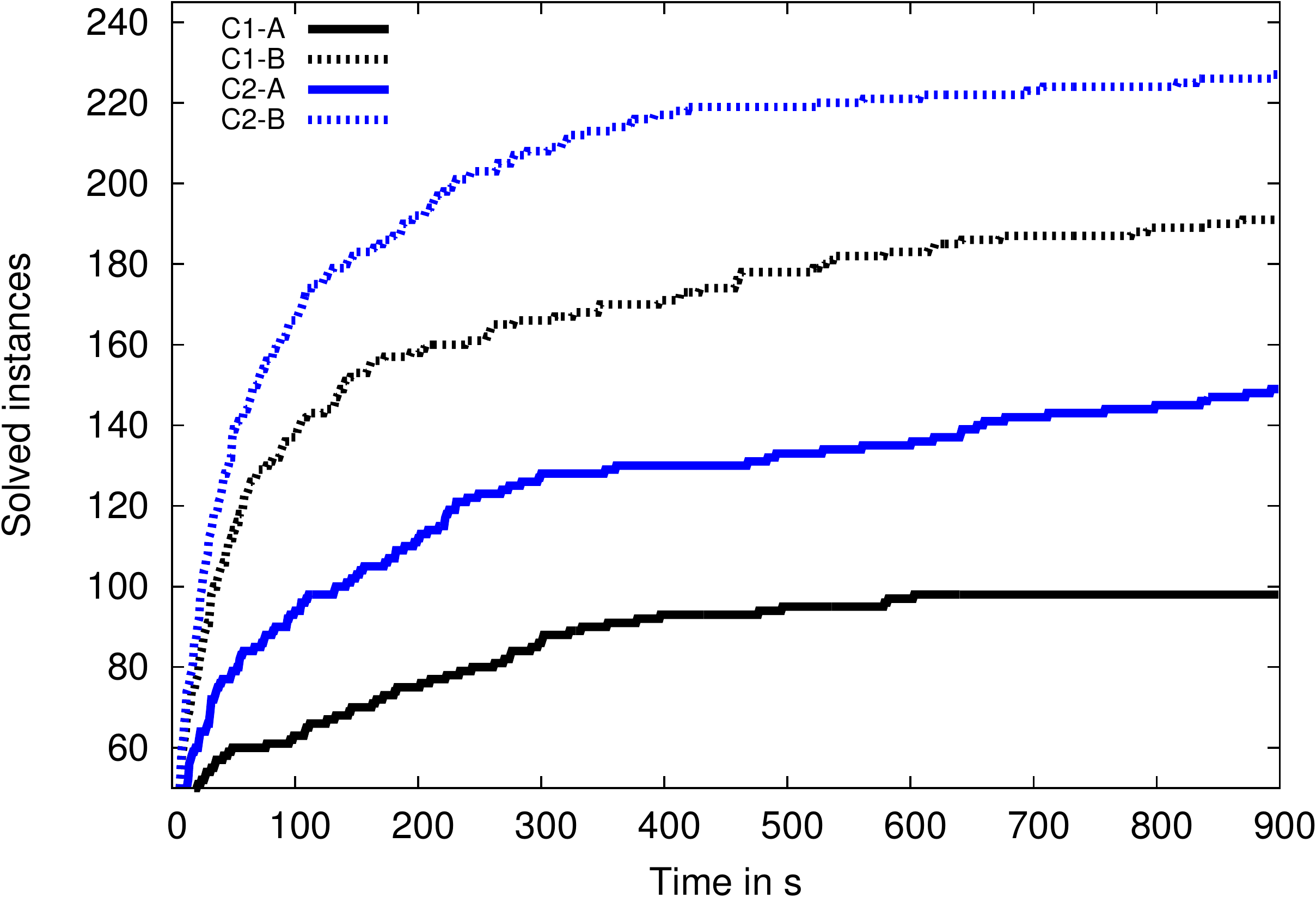}}
\caption{Performance profile for shortest path problems.}\label{fig:ex1}
\end{figure}

In this case, the strong performance of C2 for high-density matrices $C$ with large values that could be observed for unconstrained instances cannot be observed. The reason for this is that it does not suffice to generate the two cuts $y=0$ and $y=1$, as these are infeasible in this setting. Hence, performance of C2 actually deteriorates if the density of $C$ or the size of the values in $C$ increase.

The relative order of the methods, i.e., subproblems B perform better than A and cuts of type 2 perform better than cuts of type 1 is the same as before. Hence, also for these shortest path problems, we find that the best solution approach is given by C2-B. More detailed tables are given in Appendix~\ref{appendix}.

\section{Conclusion}
\label{sec:conclusion}

Minmax regret problems are a cornerstone in robust optimization. Despite their popularity, research has been focusing on only very simple uncertainty sets, which might not reflect actual requirements in real-world problems. In this work, we considered minmax regret problems with ellipsoidal uncertainty sets.

We gave a thorough discussion of arising problem complexities for the unconstrained combinatorial problem, and the shortest path problem. To solve these problems, two types of cuts that can be used in a scenario relaxation procedure were derived, as well as two linearizations to solve the subproblem of generating new cuts. We compared the performance of these methods in two computational experiments, using unconstrained and shortest path problems as a testbed. 

We found that the increased complexity of master problems with type 2 cuts are worth the effort, as less iterations are required to solve the minmax regret problem to optimality. The advantage is particularly strong for the unconstrained problem if the values of the deviation matrix $C$ are dense and large.

In future research, heuristic solution algorithms should be developed and tested, due to the high computational effort when solving these problems.

\bibliographystyle{alpha}
\bibliography{references}

\newpage
\appendix

\section{Appendix}
\label{appendix}

We present detailed results for the experiments described in Section~\ref{sec:experiments}. In Tables~\ref{tab:01} and \ref{tab:11}, we show the average number of cuts and the number of problems that were solved to optimality for each instance set. We show how much time was spent in the relaxed master problem and in the subproblem in Tables~\ref{tab:02} and~\ref{tab:12}.

\begin{table}[htbp]
\small\centering
\begin{tabular}{r|rr|rr|rr|rr}
 & \multicolumn{2}{|c|}{C1-A} & \multicolumn{2}{|c|}{C1-B} & \multicolumn{2}{|c|}{C2-A} & \multicolumn{2}{|c}{C2-B} \\
Inst. & Cuts & Opt & Cuts & Opt & Cuts & Opt & Cuts & Opt \\
\hline
 & & & & & & & & \\[-2ex]
$\cI^{5\phantom{0}}_s$ & 57.0 & 45 & 160.1 & 65 & 12.7 & 43 & 13.6 & 50 \\[1mm]
$\cI^{15}_s$ & 18.5 & 38 & 48.6 & 72 & 3.6 & 60 & 3.9 & 80 \\[1mm]
$\cI^{25}_s$ & 9.2 & 41 & 19.3 & 77 & 2.7 & 59 & 2.9 & 79 \\[1mm]
\hline
 & & & & & & & & \\[-2ex]
$\cI^{5\phantom{0}}_m$ & 33.2 & 39 & 142.9 & 53 & 9.0 & 46 & 10.2 & 53 \\[1mm]
$\cI^{15}_m$ & 20.5 & 35 & 75.5 & 57 & 3.7 & 68 & 3.7 & 79 \\[1mm]
$\cI^{25}_m$ & 21.5 & 43 & 53.7 & 75 & 2.5 & 79 & 2.5 & 80 \\[1mm]
\hline
 & & & & & & & & \\[-2ex]
$\cI^{5\phantom{0}}_l$ & 26.6 & 32 & 136.3 & 49 & 5.0 & 62 & 5.5 & 77 \\[1mm]
$\cI^{15}_l$ & 25.4 & 38 & 92.9 & 56 & 2.3 & 80 & 2.3 & 80 \\[1mm]
$\cI^{25}_l$ & 32.7 & 40 & 77.4 & 58 & 2.0 & 80 & 2.0 & 80
 \end{tabular}
\caption{Results for unconstrained instances. ''Cuts'' is the average number of cuts that were generated during the solution process. ''Opt'' is the number of problems that were solved to optimality, out of 80 for each instance type.}\label{tab:01}
\end{table}

\begin{table}[htbp]
\small\centering
\begin{tabular}{r|rr|rr|rr|rr}
 & \multicolumn{2}{|c|}{C1-A} & \multicolumn{2}{|c|}{C1-B} & \multicolumn{2}{|c|}{C2-A} & \multicolumn{2}{|c}{C2-B} \\
Inst. & Main & SUB & Main & SUB & Main & SUB & Main & SUB \\
\hline
 & & & & & & & & \\[-2ex]
$\cI^{5\phantom{0}}_s$ & 5.8 & 94.2 & 55.8 & 44.2 & 73.5 & 26.5 & 97.8 & 2.2 \\[1mm]
$\cI^{15}_s$ & 2.0 & 98.0 & 8.8 & 91.2 & 21.6 & 78.4 & 65.7 & 34.3 \\[1mm]
$\cI^{25}_s$ & 1.3 & 98.7 & 5.0 & 95.0 & 12.6 & 87.4 & 43.6 & 56.4 \\[1mm]
\hline
 & & & & & & & & \\[-2ex]
$\cI^{5\phantom{0}}_m$ & 4.0 & 96.0 & 48.5 & 51.5 & 60.6 & 39.4 & 95.4 & 4.6 \\[1mm]
$\cI^{15}_m$ & 2.7 & 97.3 & 16.8 & 83.2 & 25.2 & 74.8 & 73.0 & 27.0 \\[1mm]
$\cI^{25}_m$ & 2.5 & 97.5 & 17.3 & 82.7 & 18.6 & 81.4 & 70.4 & 29.6 \\[1mm]
\hline
 & & & & & & & & \\[-2ex]
$\cI^{5\phantom{0}}_l$ & 3.2 & 96.8 & 45.9 & 54.1 & 37.9 & 62.1 & 93.1 & 6.9 \\[1mm]
$\cI^{15}_l$ & 2.2 & 97.8 & 46.4 & 53.6 & 24.5 & 75.5 & 86.0 & 14.0 \\[1mm]
$\cI^{25}_l$ & 4.6 & 95.4 & 64.1 & 35.9 & 27.5 & 72.5 & 90.8 & 9.2
 \end{tabular}
\caption{Results for unconstrained instances. ''Main'' is the average percentage of time that was spent in the master problem. ''SUB'' is the average percentage of time that was spent in the subproblem.}\label{tab:02}
\end{table}

 \begin{table}[htbp]
\small\centering
\begin{tabular}{r|rr|rr|rr|rr}
 & \multicolumn{2}{|c|}{C1-A} & \multicolumn{2}{|c|}{C1-B} & \multicolumn{2}{|c|}{C2-A} & \multicolumn{2}{|c}{C2-B} \\
Inst. & Cuts & Opt & Cuts & Opt & Cuts & Opt & Cuts & Opt \\
\hline
 & & & & & & & & \\[-2ex]
$\cJ^{5\phantom{0}}_s$ & 11.4 & 69 & 13.2 & 80 & 2.6 & 79 & 2.6 & 80 \\[1mm]
$\cJ^{15}_s$ & 10.0 & 59 & 17.0 & 80 & 2.4 & 65 & 2.6 & 80 \\[1mm]
$\cJ^{25}_s$ & 8.9 & 46 & 19.7 & 79 & 2.1 & 59 & 2.7 & 80 \\[1mm]
\hline
 & & & & & & & & \\[-2ex]
$\cJ^{5\phantom{0}}_m$ & 15.1 & 62 & 21.0 & 80 & 2.8 & 80 & 2.8 & 80 \\[1mm]
$\cJ^{15}_m$ & 15.2 & 45 & 43.1 & 78 & 3.0 & 60 & 3.8 & 80 \\[1mm]
$\cJ^{25}_m$ & 12.4 & 31 & 63.2 & 63 & 2.6 & 50 & 4.0 & 80 \\[1mm]
\hline
 & & & & & & & & \\[-2ex]
$\cJ^{5\phantom{0}}_l$ & 24.0 & 45 & 57.2 & 79 & 3.7 & 76 & 3.8 & 80 \\[1mm]
$\cJ^{15}_l$ & 15.9 & 29 & 80.1 & 54 & 2.7 & 44 & 4.9 & 74 \\[1mm]
$\cJ^{25}_l$ & 14.7 & 20 & 79.8 & 48 & 2.8 & 39 & 5.2 & 66
 \end{tabular}
\caption{Results for shortest path instances. ''Cuts'' is the average number of cuts that were generated during the solution process. ''Opt'' is the number of problems that were solved to optimality, out of 80 for each instance type..}\label{tab:11}
\end{table}

 \begin{table}[htbp]
\small\centering
\begin{tabular}{r|rr|rr|rr|rr}
 & \multicolumn{2}{|c|}{C1-A} & \multicolumn{2}{|c|}{C1-B} & \multicolumn{2}{|c|}{C2-A} & \multicolumn{2}{|c}{C2-B} \\
Inst. & Main & SUB & Main & SUB & Main & SUB & Main & SUB \\
\hline
 & & & & & & & & \\[-2ex]
$\cJ^{5\phantom{0}}_s$ & 0.8 & 99.2 & 17.4 & 82.6 & 23.6 & 76.4 & 83.1 & 16.9 \\[1mm]
$\cJ^{15}_s$ & 0.8 & 99.2 & 2.5 & 97.5 & 22.4 & 77.6 & 57.7 & 42.3 \\[1mm]
$\cJ^{25}_s$ & 0.7 & 99.3 & 1.9 & 98.1 & 17.3 & 82.7 & 50.6 & 49.4 \\[1mm]
\hline
 & & & & & & & & \\[-2ex]
$\cJ^{5\phantom{0}}_m$ & 0.7 & 99.3 & 11.5 & 88.5 & 32.1 & 67.9 & 85.3 & 14.7 \\[1mm]
$\cJ^{15}_m$ & 1.1 & 98.9 & 4.9 & 95.1 & 34.2 & 65.8 & 80.9 & 19.1 \\[1mm]
$\cJ^{25}_m$ & 0.9 & 99.1 & 5.4 & 94.6 & 27.7 & 72.3 & 82.2 & 17.8 \\[1mm]
\hline
 & & & & & & & & \\[-2ex]
$\cJ^{5\phantom{0}}_l$ & 1.4 & 98.6 & 9.2 & 90.8 & 44.0 & 56.0 & 86.6 & 13.4 \\[1mm]
$\cJ^{15}_l$ & 1.3 & 98.7 & 9.3 & 90.7 & 27.6 & 72.4 & 87.4 & 12.6 \\[1mm]
$\cI^{25}_l$ & 0.8 & 99.2 & 9.6 & 90.4 & 19.0 & 81.0 & 78.0 & 22.0
 \end{tabular}
\caption{Results for shortest path instances. ''Main'' is the average percentage of time that was spent in the master problem. ''SUB'' is the average percentage of time that was spent in the subproblem.}\label{tab:12}
\end{table}

\end{document}